\newtheorem{thm}     {Theorem}[section]
\newtheorem{prop}    [thm]{Proposition}
\newtheorem{definition}  [thm]{Definition}
\newtheorem{lemma}   [thm]{Lemma}
\newtheorem{remark}   [thm]{Remark}
\newtheorem{example}   [thm]{Example}
\newcommand{\g}{\mathfrak g}
\newcommand{\C}{\mathbb C}
\newcommand{\R}{\mathbb R}
\newcommand{\Z}{\mathbb Z}
\newcommand{\Span}{{\rm Span}}
\def\d{\partial}
\def\l{\ell}
\def\Re{{\rm Re\,}}
\def\Im{{\rm Im\,}}
\def\bar{\overline}
\def\rank{{\rm rank}\,}
\def\<{\langle}
\def\>{\rangle}
\def\({\left(}
\def\){\right)}
\def\br{\hfill\break}
\def\ad{{\rm ad\,}}
\begin{document}

\subjclass{32H12, 32V40}

\title[Infinitesimal automorphisms
and second jet determination for CR maps]
{Infinitesimal automorphisms of quadrics and \\
second jet determination for CR mappings}

\author{Alexander Tumanov}
\address{Department of Mathematics, University of Illinois,
1409 West Green St., Urbana, IL 61801}
\email{tumanov@illinois.edu}

\thanks{Partially supported by Simons Foundation grant.}

\maketitle

\begin{abstract}
We consider a problem whether a CR mapping
of a generic manifold in complex space is uniquely
determined by its finite jet at a point, which is
referred to as finite jet determination.
We derive the finite jet determination for CR mappings
of smooth Levi nondegenerate manifolds of
arbitrary codimension from the finite
dimensionality of the algebra of infinitesimal
automorphisms of the corresponding quadrics.
Previously, this implication was known for
real analytic manifolds.
We prove a new 2-jet determination result
that covers most affirmative results on this matter
obtained so far.

Key words: Infinitesimal automorphism, Second jet determination, CR mapping.
\end{abstract}

\section{Introduction}

Let $M\subset\C^n$ be a generic real submanifold of real
codimension $k$. Then $M$ has CR dimension
$\dim_{CR}M=m=n-k$.
We introduce coordinates $(z,w)\in\C^n$,
$z\in\C^m$, $w=u+iv\in\C^k$, so that $M$ has a local
equation of the form
$$
v=h(z,u),
$$
here $h=(h_1,\dots,h_k)$ is a smooth real vector function with $h(0)=0$, $dh(0)=0$.
Furthermore, we can choose the coordinates so that the equation of $M$ takes the form
$$
v=h(z,u)=F(z,z)+O(|z|^3+|u|^3).
$$
Here
$$
F=(F_1,\dots, F_k),\quad
F_j(z,z)=\<A_j z,z\>,\quad
\<a,b\>=\sum a_l \bar b_l,
$$
$A_j$-s are Hermitian matrices.
The vector valued Hermitian from $F$
can be regarded as the Levi form of $M$ at $0$.

\begin{itemize}
\item
We say $M$ is (Levi) \emph{nondegenerate} at 0 if \br
(a) the matrices $A_j$ are linearly independent and \br
(b) $F(z,\zeta)=0$ for all $z\in\C^m$
implies $\zeta=0$.
\smallskip

\item
We say $M$ is \emph{strongly nondegenerate} at 0 if
$M$ is nondegenerate and
there is $c\in\R^k$ such that
$\det\left(\sum c_j A_j\right)\ne0$.
\smallskip

\item
We say $M$ is \emph{strongly pseudoconvex} at 0 if
$M$ is nondegenerate and
there is $c\in\R^k$ such that $\sum c_j A_j >0$.
\end{itemize}
We will also call the form $F$ itself respectively
nondegenerate, strongly nondegenerate,
or strongly pseudoconvex in these cases.

We recall that a \emph{CR mapping}
or a \emph{CR diffeomorphism}
between two generic submanifolds is a diffeomorphism
that induces a complex isomorphism between their
complex tangent bundles.

We are concerned with the problem whether a
CR diffeomorphism between two manifolds is uniquely
determined by its finite jet at a point,
which is referred to as
{\em finite jet determination}.
The problem has been popular since 1970-s,
and the number of publications on the matter
has been very large, see surveys
\cite{Baouendi, BC-M-survey, Zaitsev}.
Nevertheless, there are fundamental open questions even
in the Levi nondegenerate case, to which we restrict here.

Tanaka \cite{Tanaka} gave a solution to the CR equivalence
problem for nondegenerate CR manifolds of codimensions
$k=1, m^2-1, m^2$.
His result implies 2-jet determination
for real analytic CR mappings of
real analytic nondegenerate manifolds
of said codimensions.
Tanaka's result for a real hypersurface ($k=1$)
was rediscovered by Chern and Moser \cite{Chern-Moser}.

Beloshapka \cite{Beloshapka1988} proved
finite jet determination
for real analytic CR mappings of
real analytic nondegenerate manifolds.

Bertrand and Meylan \cite{BM2021} prove
2-jet determination for $C^3$-smooth CR mappings of
$C^4$-smooth generic nondegenerate
manifold $M$ with additional condition that the authors
call \emph{D-nondegeneracy}. In particular, it implies
that there is $z\in \C^m$ such that the vectors
$\{A_j z: 1\le j\le k\}$ are $\R$-linearly independent.
This condition is quite restrictive, in particular,
it implies that $k\le 2m$,
whereas the dimension of the space of all Hermitian
forms on $\C^m$ is equal to $m^2$, and the nondegeneracy
condition, part (a), imposes only the restriction
$k\le m^2$.

In \cite{T2020}, we prove
2-jet determination for smooth CR automorphisms of
smooth \emph{strongly pseudoconvex} manifolds.

Beloshapka \cite{Beloshapka2022} proves 2-jet determination
for nondegenerate codimension $\le 3$ manifolds in
the real analytic case.

On the other hand, Meylan \cite{M2020} has constructed a surprising counterexample
of a quadric for which 2-jet determination fails.
For arbitrary large integer $p$,
Gregerovi\v c and Meylan \cite{GM2020} have constructed
counterexamples for which $p$-jet determination fails.

In this paper, we prove 2-jet determination
for smooth CR mappings under a new more general condition
that we call \emph{T-nondegenaracy},
see Definition \ref{T-nondeg-definition}.
Our result implies all affirmative results on
2-jet determination mentioned above, in particular,
for strictly pseudoconvex, D-nondegenerate
and codimension $\le 3$ CR manifolds.

We note that
Bertrand, Blanc-Centi, and Meylan
\cite{BBM2019, BM2021, BM2022}
and the author \cite{T2020} obtained their results
by using stationary discs, a family of analytic discs
invariant under CR mappings. This method works so far only
for \emph{strongly} nondegenerate manifolds because
the existence of stationary discs is proven under
this assumption.

In this paper, we follow the classical approach
based on the infinitesimal automorphisms of quadrics.
The Lie algebra $\g$ of infinitesimal automorphisms
of the quadric $v=F(z,z)$ has a natural grading
$\g=\g_{-2}+\g_{-1}+\g_0+\ldots$.
It is long known \cite{Beloshapka1988,T1988} that for
a nondegenerate quadric, $\g$ has finite dimension,
which implies (Beloshapka \cite{Beloshapka1988})
finite jet determination in the real analytic case.
We prove (Theorem \ref{Finite-jet-det}) this fact in the smooth case.
In particular, we prove (Theorem \ref{g3=0})
that for T-nondegenerate
CR manifolds we have $\g_3=0$, which implies
(Theorem \ref{2-jet-det})
2-jet determination in the smooth case.

The paper is structured as follows.
In Section 2, we introduce infinitesimal automorphisms
of quadrics and state basic facts about them.
In Section 3, we develop a rudimentary theory
of weighted jets of mappings.
In Section 4, we derive finite jet determination
results for smooth CR mappings from the finite
dimensionality of $\g$.
In Section 5, we prove the vanishing of $\g_3$
for T-nondegenerate quadrics.
In Sections 6 and 7, we show that the previous
affirmative results on 2-jet determination
follow from our Theorems \ref{2-jet-det} and \ref{g3=0}.
Finally, in Section 8, we give several examples
illustrating various nondegeneracy conditions
and their effect on 2-jet determination.

The author wishes to thank Francine Meylan for letting him know about her counterexample \cite{M2020}
and Alexander Sukhov for useful discussions.

\section{Infinitesimal automorphisms of quadrics}
An infinitesimal CR-automorphism of a CR-manifold $M$
is a vector field on $M$ that generates a local 1-parameter
group of CR-mappings (CR-automorphisms) $M\to M$.

Let $M_0$ be a nondegenerate quadric defined
as before by the equations
$$
v=F(z,z),\quad
z\in\C^m,\quad
w=u+iv\in\C^k.
$$
Here
$$
F=(F_1,\dots, F_k),\quad
F_j(z,z)=\<A_j z, z\>,\quad
\<a,b\>=\sum a_l\bar b_l.
$$
Let $G$ be the group of all CR-automorphisms $M_0\to M_0$.
Then $G$ is a finite dimensional Lie group and its Lie algebra $\g$ is the set of all infinitesimal automorphisms of $M_0$.
The dimension of $G$ has an estimate depending on $m$ and $k$
(see Beloshapka \cite{Beloshapka1988},
Isaev and Kaup \cite{Isaev-Kaup},
and the author \cite{T1988}).

It turns out that all
elements of $G$ and $\g$ are respectively rational and polynomial.
In particular, every vector field $X\in\g$ has the form
\begin{equation*}
X=\sum f_j\frac{\d}{\d z_j}+\sum g_\l\frac{\d}{\d w_\l}
=f\frac{\d}{\d z}+g\frac{\d}{\d w}
=:(f,g),
\end{equation*}
where $f$ and $g$ are polynomial vector functions
of $z$ and $w$ that satisfy the equation
(see \cite{Beloshapka1988, BC-M-survey})
\begin{equation}\label{inf-auto-equ}
\Im(g-2iF(f,z))=0,\quad
(z,w)\in M.
\end{equation}
This equation implies
(see \cite{Beloshapka1988, BC-M-survey})
\begin{equation}\label{z-degree}
\deg_z f\le 2, \quad
\deg_z g\le 1.
\end{equation}
We will use the equation (\ref{inf-auto-equ})
rather than the infinitesimal automorphisms themselves.

We give the variables and differentiations
$z_j, w_j, \d/\d z_j, \d/\d w_j$ the weights
$1,2,-1,-2$ respectively.
Let $\g_p$ be the set of vector fields $X\in\g$ with
weighted homogeneous degree $p\in\Z$. Then
the elements $X=(f_{p+1},g_{p+2})\in\g_p$ are
solutions of the equation
\begin{equation}\label{g_p}
\Im(g_{p+2}-2iF(f_{p+1},z))|_{w=u+iF(z,z)}=0,
\end{equation}
which is a linear homogeneous equation on the
coefficients on the weighted homogeneous
polynomials $f_{p+1}$ and, $g_{p+2}$ of weighted
degrees respectively $p+1$ and $p+2$.
Then
$$
\g=\sum_{p=-2}^\infty \g_p
$$
is a graded Lie algebra, that is,
$[\g_p,\g_q]\subset\g_{p+q}$.
Since $\g$ has finite dimension, $\g_p=0$ for big $p$.
Since $F$ is nondegenerate, it follows that
each vector $\xi\in\g_p$ is uniquely determined
by the map $\ad\xi:\g_{-1}\to \g_{p-1}$,
here $(\ad\xi)(\eta)=[\xi,\eta]$.
In particular, if $\g_p=0$, then $\g_q=0$ for all $q>p$.

\section{Weighted jets of mappings}
A CR manifold $M$ has a complex tangent subbundle
$T^c(M)\subset T(M)$.
In this section, we consider a more general structure.
Let $M$ be a smooth real manifold
of dimension $n$
with a real vector subbundle $H(M)\subset T(M)$ with
fiber dimension $m$.
(We will specify the smoothness class later.)
For brevity we will call such manifolds \emph{H-manifolds}.
We will consider \emph{morphisms} of H-manifolds,
that is, mappings $M\to M'$ whose tangent mappings
take $H(M)$ to $H(M')$.
We introduce a rudimentary theory
of weighted jet bundles of morphisms of H-manifolds.

Let $M$ be a H-manifold. Let $a\in M$.
Let $(x,y)\in\R^m\times\R^k$, $k=n-m$,
be local coordinates such that $a$
has coordinates $(0,0)$,
and $H_a(M)$ is represented by the equation $y=0$.
We will call such a coordinate system \emph{admissible}
with origin at $a\in M$.
One should think of $x$ and $y$ variables as having weights
respectively 1 and 2. We will use weighted distance
to the origin $\sigma=|x|+|y|^{1/2}$.

Let $M$ and $M'$ be H-manifolds, and let $a\in M$,
$a'\in M'$. Let $(x,y)$ and $(x',y')$ be corresponding admissible coordinates.
Let $\Phi:M\to M'$ be a morphism, $\Phi(a)=a'$.
Then $\Phi$ can be represented by equations
$$
x'=\phi(x,y), \quad
y'=\psi(x,y).
$$
Then we will write $\Phi\cong(\phi,\psi)$.
Since $\Phi(a)=a'$, we have $\phi(0,0)=0$, $\psi(0,0)=0$.

Let $\Phi\cong(\phi,\psi)$ and
$\tilde\Phi\cong(\tilde\phi,\tilde\psi)$
be morphisms sending $a$ to $a'$.
Let $p,q$ be nonnegative integers with $p\le q\le 2p$.
We say that $\Phi$ and $\tilde\Phi$ are $(p,q)$-equivalent
and write $\Phi\sim_{(p,q)}\tilde\Phi$ if
$$
\phi-\tilde\phi=o(\sigma^p), \quad
\psi-\tilde\psi=o(\sigma^q).
$$
We claim that the equivalence relation $\sim_{(p,q)}$
is well defined, that is, independent of coordinates
$(x,y)$ and $(x',y')$. Without loss of generality
we restrict to the case $\tilde\Phi\equiv a'$, that is,
$\tilde\phi\equiv0, \tilde\psi\equiv0$.

We first show the independence of  $(x,y)$.
Indeed, let $(\hat x,\hat y)$ be another
admissible system. Then $\d\hat y/\d x=0$,
$\hat x=O(|x|+|y|)$, $\hat y=O(|x|^2+|y|)$.
Then
$\hat\sigma=|\hat x|+|\hat y|^{1/2}
=O(|x|+|y|)+O(|x|+|y|^{1/2})=O(\sigma)$.
Similarly, $\sigma=O(\hat\sigma)$.
Hence, the relation is independent of $(x,y)$.

We now consider the effect of another coordinate
system $(\hat x',\hat y')$ in the target.
Let $\phi=o(\sigma^p)$, $\psi=o(\sigma^q)$.
Then in the new coordinates we will have
$\hat\phi=O(|\phi|+|\psi|)
=o(\sigma^p+\sigma^q)=o(\sigma^p)$ because $p\le q$.
We will also have
$\hat\psi=O(|\phi|^2+|\psi|)
=o(\sigma^{2p}+\sigma^q)=o(\sigma^q)$
because $q\le 2p$.
Hence, the relation is independent of $(x',y')$.

We denote the set of equivalence classes
of morphisms sending $a$ to $a'$
by $J^{p,q}_{a,a'}(M,M')$ and the set of
all classes for all pairs $(a,a')$ by
$J^{p,q}(M,M')$. Then one can see that
$J^{p,q}(M,M')\to M\times M'$
is a fiber bundle whose fiber is a Euclidean space.
We call the members of $J^{p,q}(M,M')$ weighted
jets of morphisms $M\to M'$.

Let $\Phi:M\to M'$ be a morphism with
$\Phi(a)=a'$. We denote by $j^{p,q}_a(\Phi)$
the equivalence class of $\Phi$ in
$J^{p,q}_{a,a'}(M,M')$ and call it the
$(p,q)$-jet of $\Phi$ at $a$.
Given coordinates $(x,y)$ and $(x',y')$,
for $\Phi\cong(\phi,\psi)$,
this jet is represented by the set
$$
\{
a,a', D^\alpha\phi(0), D^\beta\psi(0):
0<\|\alpha\|\le p, 0<\|\beta\|\le q
\},
$$
here $\alpha$ and $\beta$ are multi-indices
corresponding to the variables $(x,y)$, and
for a multi-index $\alpha=(\alpha_x,\alpha_y)$,
the weighted length
$\|\alpha\|=|\alpha_x|+2|\alpha_y|$.

For a morphism $\Phi:M\to M'$, we also introduce
the map $j^{p,q}(\Phi):M\to J^{p,q}(M,M')$,
$a\mapsto j^{p,q}_a(\Phi)$, and call it
the $(p,q)$-jet of $\Phi$.

We now restrict to the case $q=p+1$ and in all notations
introduced so far, we replace the pair $(p,q)$ by
the single number $p$.
In our applications to CR manifolds, we will
understand weighted jets of mappings this way.
The main point of this section is the following.

\begin{prop}\label{Derivative-of-jet}
Let $p\ge 1$ be an integer.
Let $M$ and $M'$ be $C^{p+2}$-smooth H-manifolds.
Let $\Phi:M\to M'$ be a morphism.
Let $a\in M$, $a'=\Phi(a)$, and let $X\in H_a(M)$.
Then $X j^p(\Phi)$ (the derivative of the mapping
along the tangent vector) can be expressed
in terms of $j^{p+1}_a(\Phi)$.
\end{prop}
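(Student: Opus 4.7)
The plan is to fix admissible coordinates $(x,y)$ at $a\in M$ and $(x',y')$ at $a'\in M'$, write $\Phi\cong(\phi,\psi)$ with $\phi,\psi\in C^{p+2}$, and trivialize the $(p,p+1)$-jet bundle near $a$ by choosing admissible coordinates at every nearby base point that depend smoothly on that point. Concretely, for $b$ near $a$ I would take a local diffeomorphism $\Theta_b$ of $\R^n$ that sends $b$ to the origin and realigns $H_b(M)$ with the $x$-plane, and similarly $\Theta'_{b'}$ at points $b'\in M'$. Both families depend smoothly on the base point with $\Theta_a=\id$ and $\Theta'_{a'}=\id$. They exist because $H(M)$ and $H(M')$ are smooth subbundles, and the correction to pure translation can be arranged in the form $\tilde y = y-y_0 + \eta(x-x_0;b)$, where $\eta$ has weight at least $2$ in $(x-x_0)$.

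In these coordinates, $\Phi$ is represented by $(\tilde\phi_b,\tilde\psi_b)=\Theta'_{\Phi(b)}\circ(\phi,\psi)\circ\Theta_b^{-1}$, and $j^p_b(\Phi)$ is encoded by the derivatives
$$
\{D^\alpha\tilde\phi_b(0):\|\alpha\|\le p\},\quad \{D^\beta\tilde\psi_b(0):\|\beta\|\le p+1\}.
$$
Let $\gamma$ be a $C^1$ curve in $M$ with $\gamma(0)=a$ and $\gamma'(0)=X$. Then $Xj^p(\Phi)|_a$ is represented by the numbers $\frac{d}{ds}\big|_{s=0}D^\alpha\tilde\phi_{\gamma(s)}(0)$ and $\frac{d}{ds}\big|_{s=0}D^\beta\tilde\psi_{\gamma(s)}(0)$, which I compute by expanding the chain rule in the composition displayed above.

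Since $X\in H_a(M)$ is a weight-one tangent vector, each $s$-differentiation that lands on a derivative of $\phi$ or $\psi$ raises its weighted order by exactly one. Thus only $\phi$-derivatives of weighted order at most $p+1$ and $\psi$-derivatives of weighted order at most $p+2$ appear, which is precisely the data in $j^{p+1}_a(\Phi)$. The remaining factors come from $s$-derivatives of $\Theta_b$ and $\Theta'_{b'}$ and depend only on $X$ and on the H-structures of $M,M'$. Assembling the terms shows that $Xj^p(\Phi)|_a$ is a universal polynomial expression in the components of $j^{p+1}_a(\Phi)$ with coefficients determined by the H-structures.

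The main obstacle, and the reason for the constraint $q=p+1$ built into the jet definition, is the weight bookkeeping: if the rotation of $H_b(M)$ as $b$ varies could force a $\phi$-derivative of weighted order $>p+1$ or a $\psi$-derivative of weighted order $>p+2$, the proposition would fail. The correction $\eta$ above, having weight at least $2$, keeps $\Theta_b$ and $\Theta'_{b'}$ from inflating the order, and $C^{p+2}$ regularity of $\phi,\psi$ makes every derivative that appears well defined. Verifying this weight budget carefully is what the proof must do.
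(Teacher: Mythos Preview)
Your setup is exactly the paper's: introduce smoothly varying admissible coordinates at nearby base points (the paper writes them explicitly as $(\xi,\eta)\mapsto(x+\xi,\,y+\eta+A(x,y)\xi)$, where $A(x,y)$ is the matrix whose graph is $H_{(x,y)}(M)$), represent $j^p_b(\Phi)$ in those coordinates, and differentiate in the $H$-direction. But the one sentence that carries the whole argument---why the weight budget is not exceeded---is wrong as you state it. You claim the correction $\eta(x-x_0;b)$ can be taken of weight $\ge 2$ in $x-x_0$. It cannot: to make $H_b(M)=\{(\xi,A(b)\xi)\}$ horizontal in the new chart you need $d\tilde y|_b=dy+\partial_x\eta(0;b)\,dx$ to annihilate $(\xi,A(b)\xi)$, which forces the \emph{linear} part $\partial_x\eta(0;b)=-A(b)$. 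A correction of weight $\ge 2$ would leave $\Theta_b$ inadmissible at every $b\ne a$.

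The mechanism the paper actually uses is the vanishing $A(a)=0$ (equivalently your $\Theta_a=\id$), not the weight of $\eta$. When you apply $D^\alpha_{\xi,\eta}$ to $\phi(x+\xi,\,y+\eta+A(x,y)\xi)$, each $\partial_\xi$ that enters through the second argument produces a weight-$2$ derivative of $\phi$ \emph{together with a factor} $A(x,y)$; a term carrying a $\phi$-derivative of weighted order $\|\alpha\|+j$ thus carries $A(x,y)^j$. Since $A(0,0)=0$, the final $\partial_x|_{(x,y)=0}$ on any $j\ge 1$ term must fall on the factor $A$ rather than on $\phi$, so no derivative of $\phi$ of weighted order beyond $\|\alpha\|+1\le p+1$ survives; the $j=0$ terms behave the same way. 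Likewise for $\psi$. The paper also notes why $q=p+1$ (not the source-side rotation) is the binding constraint: the target-side correction adds a term $-A'(\phi(x,y),\psi(x,y))\,\tilde\phi_b$ to $\tilde\psi_b$, so $\tilde\psi_b$-derivatives of weighted order $\le q$ drag in $\phi$-derivatives of the same order, and these must stay $\le p+1$. Once you replace your weight-$\ge 2$ claim by the observation $A(0,0)=0$ and track the factors of $A$ as above, your outline becomes the paper's proof.
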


The result is quite intuitive because the derivative
with respect to the $x$-variable, whose weight is 1,
is supposed to add 1 to the weighted order of the jet.
If the bundles $H(M)$ and $H(M')$ were integrable
(involutive) then we could represent all jets in the
same coordinate systems. Then the conclusion would be
obvious. However, in our application to CR manifolds,
they are not integrable, so the result needs
some attention.

\begin{proof}
We fix admissible coordinates $(x,y)$ and $(x',y')$
in $M$ and $M'$, and we call them main coordinates
identifying $M$ and $M'$ with open sets in the Euclidean
space. In order to represent $j^p(\Phi)$,
we introduce admissible coordinates
$(\xi,\eta)$ and $(\xi',\eta')$ with origins at
$(x,y)$ and $(x',y')$ respectively, and we call
them variable coordinates.
We define $(\xi,\eta)$ by the mapping
$$
(\xi, \eta)\mapsto (\tilde x, \tilde y)=
(x+\xi, y+\eta+A(x,y)\xi),
$$
here $A(x,y)$ is a matrix, so $H_{(x,y)}(M)$
in the main coordinates $(x,y)$ is defined as
$\{(\xi,A(x,y)\xi): \xi\in\R^m \}$.
Then $A(0,0)=0$.
We define  $(\xi',\eta')$ similarly.

Let $\Phi\cong(\phi,\psi)$ in the main coordinates.
We need the representation of $\Phi$ in the variable
coordinates, denote it $\Phi\cong(f,g)$. Then we have
\begin{align*}
&f=\phi(x+\xi,y+\eta+A(x,y)\xi)-\phi(x,y),\\
&g=\psi(x+\xi,y+\eta+A(x,y)\xi)-\psi(x,y)
-A'(\phi(x,y),\psi(x,y))f.
\end{align*}
The desired claim means exactly that the derivatives
\begin{align*}
&\d_x|_{(x,y)=(0,0)}
D^\alpha_{\xi,\eta}|_{(\xi,\eta)=(0,0)}f,\quad
\|\alpha\|\le p,\\
&\d_x|_{(x,y)=(0,0)}
D^\beta_{\xi,\eta}|_{(\xi,\eta)=(0,0)}g,\quad
\|\beta\|\le p+1,
\end{align*}
can be expressed in terms of
$D^\alpha\phi(0,0)$ and
$D^\beta\psi(0,0)$ with
$\|\alpha\|\le p+1$,
$\|\beta\|\le p+2$.
The verification is an exercise on the Chain Rule.
Indeed, an increase in the weighted order of
derivative by more than one unit can happen
when we differentiate $\phi$ or $\psi$
with respect to the variable $\xi$ that occurs in the
second argument of the function, because it adds 2
to the weighed order. Then the factor $A(x,y)$ comes out.
Then in the subsequent differentiation with
respect to $x$, we only have to differentiate
$A(x,y)$ because $A(0,0)=0$. Thus the final differentiation
with respect to $x$ does not increase
the weighted order of the derivatives of $\phi$ or $\psi$,
so the resulting weighted order
of the derivative increases by no more than one unit.

We point out that the choice $q=p+1$ is dictated by
the above requirement on the derivatives of $g$.
Since the equation of $g$ involves $f$,
the derivatives of $g$ would include the derivatives
of $\phi$, whose order has to be bounded by $p+1$.
We leave the details to the reader.
\end{proof}

\section{Finite jet determination}
Let $M$ be a nondegenerate
CR manifold with equation
$$
v=h(z,u)=F(z,z)+O(|z|^3+|u|^3),
$$
and let $M_0$ be the corresponding quadric with equation
$$
v=F(z,z).
$$
Let $\g$ be the graded Lie algebra of infinitesimal
automorphisms of $M_0$.
It turns out that the finite dimensionality of $\g$
implies finite jet
determination for smooth CR mappings of $M$.

A CR manifold $M$ is an H-manifold as defined in
Section 3 with $H(M)=T^c(M)$. Then we can use
weighted jets of CR mappings as we defined there.

\begin{thm}\label{Finite-jet-det}
Let $p\ge 1$ be an integer.
Let $M, M'$ be $C^{p+2}$-smooth nondegenerate CR manifolds
defined as above. Suppose $\g_{p}=0$.
Then every germ at 0 of a $C^{p+2}$-smooth CR diffeomorphism
$\Phi:M\to M'$
is uniquely determined by the weighted jet $j^p_0(\Phi)$ .
\end{thm}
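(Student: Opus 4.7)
The plan is to propagate jet-equality from the origin to an entire neighborhood. Suppose $\Phi, \tilde\Phi: M \to M'$ are $C^{p+2}$-smooth CR diffeomorphisms with $j^p_0(\Phi) = j^p_0(\tilde\Phi)$. Because $j^p_a$ records the base point $\Phi(a)$ as part of its data, it is enough to show that $j^p_a(\Phi) = j^p_a(\tilde\Phi)$ for every $a$ in some neighborhood of $0$; this already forces $\Phi(a) = \tilde\Phi(a)$ there.

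The crucial single-point lemma is a bootstrap: if $j^p_a(\Phi) = j^p_a(\tilde\Phi)$ at some point $a$, then in fact $j^{p+1}_a(\Phi) = j^{p+1}_a(\tilde\Phi)$. To prove it I would work in admissible coordinates centered at $a$ and $\Phi(a) = \tilde\Phi(a)$ in which both $M$ and $M'$ take the normalized form $v = F(z,z) + O(\sigma^3)$, expand the components $(\phi,\psi) := \Phi - \tilde\Phi$ into weighted-homogeneous parts, and isolate the leading block $(f_{p+1}, g_{p+2})$ at weighted degrees $p+1$ and $p+2$. Inserting into the CR-compatibility condition (that both maps send $M$ into $M'$) and collecting the weighted order $p+2$ terms of the imaginary part, one obtains exactly equation (\ref{g_p}) on the osculating quadric; the nonquadric $O(\sigma^3)$ corrections of $M$ and $M'$ as well as the quadratic-in-$(\phi,\psi)$ terms all land at strictly higher weighted order and are therefore irrelevant. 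Hence $(f_{p+1}, g_{p+2}) \in \g_p$, and the hypothesis $\g_p = 0$ (together with the remark at the end of Section 2, which also gives $\g_q = 0$ for all $q > p$) forces the leading difference to vanish.

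Combining the bootstrap with Proposition \ref{Derivative-of-jet} yields the propagation step: for every $X \in H_a(M)$, the derivative $X j^p(\Phi)(a)$ depends only on $a$, $X$, and $j^p_a(\Phi)$, because Proposition \ref{Derivative-of-jet} expresses it in terms of $j^{p+1}_a(\Phi)$, which the bootstrap reconstructs algebraically from $j^p_a(\Phi)$. The same rule holds for $\tilde\Phi$, so along any integral curve $a(t)$ of a CR vector field the two jet maps $t \mapsto j^p_{a(t)}(\Phi)$ and $t \mapsto j^p_{a(t)}(\tilde\Phi)$ satisfy the same first-order ODE with the same initial value at $a(0) = 0$; ODE uniqueness gives agreement along the whole curve. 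Since Levi nondegeneracy forces $T^c(M)$ to be bracket-generating on a neighborhood of $0$, the Chow--Rashevskii theorem connects $0$ to every nearby point by a concatenation of CR integral curves. Iterating the ODE argument along each segment extends $j^p(\Phi) \equiv j^p(\tilde\Phi)$ to a full neighborhood of $0$, completing the proof.

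The main obstacle is the bootstrap: one must carefully verify that $(f_{p+1}, g_{p+2})$ genuinely satisfies (\ref{g_p}) on the osculating quadric and that no remainder from the $O(\sigma^3)$ parts of $M$ or $M'$ contaminates weighted order $p+2$. This is essentially a weighted Taylor-series bookkeeping parallel to the original derivation of (\ref{inf-auto-equ}). Everything else reduces to standard ODE uniqueness and Chow's theorem.
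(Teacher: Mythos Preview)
Your proposal is correct and follows essentially the same route as the paper: the bootstrap $j^p_a(\Phi)\Rightarrow j^{p+1}_a(\Phi)$ via equation~(\ref{g_p}), then Proposition~\ref{Derivative-of-jet} to turn this into a first-order ODE for $t\mapsto j^p_{\gamma(t)}(\Phi)$ along CR curves, then propagation over a neighborhood (the paper says ``CR curves through $0$ cover an open set'' where you invoke Chow--Rashevskii, which amounts to the same thing). The one point you omit that the paper makes explicit is that the hypothesis $\g_p=0$, being the vanishing of the solution space of the linear system~(\ref{g_p}), is stable under small perturbations of the Levi form and hence holds at every point near $0$; you need this so that the bootstrap is available at each intermediate point along your concatenated CR curves, not just at the origin.
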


Beloshapka \cite{Beloshapka1988} obtained the real analytic version of this result. As a consequence we obtain
the following.

\begin{thm}\label{2-jet-det}
Let $M, M'$ be $C^5$-smooth non-degenerate
CR manifolds defined as above. Suppose $\g_3=0$.
Then every germ at 0 of a $C^5$-smooth CR diffeomorphism
$\Phi:M\to M'$
is uniquely determined by the (standard)
2-jet of $\Phi$ at $0$.
\end{thm}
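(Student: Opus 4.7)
The plan is to deduce Theorem~\ref{2-jet-det} from Theorem~\ref{Finite-jet-det} applied with $p=3$. Under the hypothesis $\g_3=0$, that theorem asserts that a $C^5$-smooth germ of a CR diffeomorphism $\Phi:M\to M'$ is uniquely determined by its weighted $3$-jet $j^3_0(\Phi)$, so it suffices to show that the standard $2$-jet of $\Phi$ at $0$ already determines $j^3_0(\Phi)$.

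Write $\Phi\cong(\phi,\psi)$ in admissible coordinates $(z,u)\in\C^m\times\R^k$ of $M$ and $M'$ (with $z$-variables of weight $1$ and $u$-variables of weight $2$). The weighted $3$-jet records $D^\alpha\phi(0)$ for $\|\alpha\|\le 3$ and $D^\beta\psi(0)$ for $\|\beta\|\le 4$, whereas the standard $2$-jet records all $D^\gamma\Phi(0)$ with unweighted $|\gamma|\le 2$. A direct comparison shows that the only entries of $j^3_0(\Phi)$ not already contained in the standard $2$-jet are the purely complex-tangential derivatives $\phi_{z^a\bar z^b}(0)$ with $a+b=3$, $\psi_{z^a\bar z^b}(0)$ with $a+b\in\{3,4\}$, and $\psi_{z^a\bar z^b u_l}(0)$ with $a+b=2$.

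I would recover these in two passes. First, the tangential Cauchy--Riemann equations $\bar L_j\Phi=0$, where $\bar L_j=\d/\d\bar z_j+(\text{coefficients vanishing at }0)\cdot\d/\d u$, yield on repeated differentiation at $0$ algebraic recursions that express every mixed derivative of $\phi$ (and of the CR combination $\Phi_w=\psi+ih'(\phi,\psi)$) containing at least one $\bar z$-index in terms of strictly lower-order derivatives of $\Phi$ and known derivatives of the defining functions $h,h'$; combined with the reality of $\psi$, which gives $\psi_{z^\alpha u^c}(0)=\overline{\psi_{\bar z^\alpha u^c}(0)}$, this recovers every missing derivative except possibly the complex-valued $\phi_{z^3}(0)$. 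Second, the latter is obtained by plugging the weighted Taylor expansion of $\Phi$ into the identity $\Im(\Phi_w)=h'(\phi,\psi)$ valid on $M$ and collecting the appropriate weighted-homogeneous terms: the resulting linear system on the unknown Taylor coefficients has obstruction space precisely $\g_3$, so the hypothesis $\g_3=0$ makes it uniquely solvable for $\phi_{z^3}(0)$ in terms of the already-determined data.

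The main obstacle is this last step: the Hermitian form $F'$ does not couple $\phi_{z^3}(0)$ to itself via a direct $z^3$-derivative of the mapping identity, so one has to combine several weighted-homogeneous equations to reduce matters to a system whose solvability is controlled precisely by $\g_3$. Granted this, two $C^5$-smooth CR diffeomorphism germs with the same standard $2$-jet at $0$ share $j^3_0$, and Theorem~\ref{Finite-jet-det} forces them to coincide on a neighborhood of $0$.
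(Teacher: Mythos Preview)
Your overall plan coincides with the paper's: invoke Theorem~\ref{Finite-jet-det} with $p=3$ and then argue that the standard $2$-jet of $\Phi$ already determines the weighted jet $j^3_0(\Phi)$. Your enumeration of the derivatives in $j^3_0(\Phi)$ that are not directly read off from the standard $2$-jet is also correct.

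The gap is in your second pass. The coefficient $\phi_{z^3}(0)$ is the pure-$z^3$ part of the weighted homogeneous component $f_{3}$, and the piece of the mapping identity in which $(f_3,g_4)$ first appear as leading unknowns is equation~(\ref{nonhomo-g_p}) with $p=2$, whose homogeneous version is the defining equation of $\g_2$, not of $\g_3$. Hence the obstruction to solving uniquely for the missing parts of $(f_3,g_4)$ is (the relevant projection of) $\g_2$, and invoking $\g_3=0$ does nothing at this step; indeed $\g_2$ need not vanish for a nondegenerate quadric. What actually forces uniqueness is the $z$-degree bound~(\ref{z-degree}): every element of $\g$, in particular of $\g_2$, satisfies $\deg_z f\le 2$ and $\deg_z g\le 1$, so no nonzero element of $\g_2$ carries a $z^3$-term in $f_3$ or a $z^2w$- or $z^4$-term in $g_4$. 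This is exactly the paper's argument, and it shows that the implication ``standard $2$-jet determines $j^3_0$'' holds for \emph{every} nondegenerate quadric, independently of $\g_3$; the hypothesis $\g_3=0$ enters only through Theorem~\ref{Finite-jet-det}. Replace your appeal to $\g_3=0$ in the second pass by the bound~(\ref{z-degree}) and the argument goes through. (The paper in fact bypasses your first pass entirely by working from the outset with the holomorphic Taylor components $f_j(z,w)$, $g_j(z,w)$, so that no $\bar z$-derivatives arise and the only missing data are the high-$z$-degree monomials of $f_3$ and $g_4$.)
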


\begin{proof}
[Proof of Theorem \ref{Finite-jet-det}]
We first show that $j^{p+1}_0(\Phi)$ is uniquely
determined by $j^p_0(\Phi)$.
The argument is well known
(see \cite{Beloshapka1988, BC-M-survey}),
and we describe it briefly.

We recall that a $C^r$-smooth CR function $f$
on a $C^r$-smooth generic manifold in $\C^n$
defined as above can be developed into
a Taylor series with remainder of the form
$f(z,w)=P(z,w)+o(|z|^r+|w|^r)$, where $P(z,w)$
is a holomorphic polynomial of degree $\le r$.
We can represent
$P$ as a sum of weighted homogeneous
holomorphic polynomials, in which variables
$z$ and $w$ have weights respectively 1 and 2,
and the index means the weighted degree of
the polynomial.
Then we obtain
$$
f=f_0+f_1+\ldots+f_r+o(|z|^r+|w|^{r/2}).
$$

Following Chern and Moser \cite{Chern-Moser}
and Beloshapka \cite{Beloshapka1988},
we expand the equations of $M$ and $M'$ and
the CR mapping $\Phi=(f,g)$ into Taylor series with
remainders and represent them as sums of weighed homogeneous
components, where $z$, $w$, and $u$ have weights
respectively 1, 2, and 2.
Without loss of generality, $\Phi(0)=0$. Then we have
\begin{align*}
& v=h(z,u)=F+h_3+\ldots+h_{p+2}
+o(|z|^{p+2}+|u|^{(p+2)/2})\\
& v'=h'(z',u')=F'+h'_3+\ldots+h'_{p+2}
+o(|z'|^{p+2}+|u'|^{(p+2)/2}) \\
& z'=f(z,w)=f_1+f_2+\ldots+f_{p+2}
+o(|z|^{p+2}+|w|^{(p+2)/2}) \\
& w'=g(z,w)=g_2+g_3+\ldots+g_{p+2}
+o(|z|^{p+2}+|w|^{(p+2)/2}).
\end{align*}
Since the derivative of $\Phi$ maps the complex
tangent plane $w=0$ to the plane $w'=0$,
we have $g_1=0$.
By linear transformations of $z$ and $w$,
without loss of generality,
we can put $f_1=z$, $g_2=w+Q(z)$, where
$Q$ is a quadratic polynomial, but one can see that
$Q=0$. Also, one can see that $F'=F$.

By plugging $z'$ and $w'$ in terms of $z$ and $w=u+ih(z,u)$
in the equation of $M'$ and collecting the terms
of weight $p+2$, we obtain:
\begin{equation}\label{nonhomo-g_p}
\Im(g_{p+2}-2iF(f_{p+1},z))|_{w=u+iF(z,z)}=\ldots
+o(|z|^{p+2}+|u|^{(p+2)/2}),
\end{equation}
here the dots mean terms that include only $f_{q+1}$ and
$g_{q+2}$ with $q<p$.

Note that the corresponding homogeneous
equation (\ref{g_p}) has only the trivial solution
because $\g_p=0$.
Then the component $(f_{p+1},g_{p+2})$ is uniquely
determined by the components of $\Phi$ of lower
weighted degree. That is, $j^{p+1}_0(\Phi)$ is uniquely
determined by $j^p_0(\Phi)$.


We note that since $\g_p$ is obtained by solving a
linear homogeneous system (\ref{g_p}),
the hypothesis that $\g_p=0$ holds at zero implies
that $\g_p=0$ holds at nearby points even though
the Levi form of $M$ at those points is not
necessarily equivalent to $F$ as a vector valued
Hermitian form, but it will be close to $F$.
Without loss of generality,
$\g_p=0$ for every point of $M$.

Let $\gamma:(-1,1)\to M$ be a CR curve in $M$,
that is, $d\gamma(t)/dt\in T^c_{\gamma(t)}(M)$,
and let $\gamma(0)=0$.
By the above arguments, $j^{p+1}_{\gamma(t)}(\Phi)$
is a function of $j^p_{\gamma(t)}(\Phi)$.
By Proposition \ref{Derivative-of-jet}, this gives rise
to a first order ordinary differential equation
on the jet $\Psi(t)=j^p_{\gamma(t)}(\Phi)$.
This equation has a unique solution with given
initial condition $\Psi(0)=j^p_0(\Phi)$.
In particular, $\Phi(\gamma(t))$ is uniquely determined
for all $t$.

Since $M$ is nondegenerate, CR curves through 0
cover an open set in $M$. Then $\Phi$ is uniquely
determined by $j^p_0(\Phi)$, as desired.
\end{proof}

\begin{proof}[Proof of Theorem \ref{2-jet-det}]
By Theorem \ref{Finite-jet-det}, the mapping $\Phi$
is uniquely determined by the the weighted jet
$j^3_0(\Phi)$. We note that $j^3_0(\Phi)$
is uniquely determined by the standard 2-jet
of $\Phi$ at 0. This follows by (\ref{nonhomo-g_p})
with $p=2$ and the bounds (\ref{z-degree})
on the $z$-degrees of $f_3$ and $g_4$.
(See \cite{BC-M2022,BC-M-survey} for details.)
\end{proof}

\begin{remark}
{\rm
The regularity requirements in Theorems \ref{Finite-jet-det}
and \ref{2-jet-det} are set so that all occurring jets
make sense. We note that by Stein \cite{Stein},
a $C^r$-smooth CR function $f$ on a nondegenerate
generic CR manifold has roughly $2r$ derivatives
along complex tangential directions. Then a weighted
jet $j^p f$ makes sense for some $p>r$.
Thus the required smoothness of class $C^{p+2}$
in Theorem \ref{Finite-jet-det}
in fact can be lowered. We leave the details to the reader.
}
\end{remark}

\section{Vanishing of $\g_3$}
Let $F$ be as above.
Let $S\subset \C^m$ be a set.
We define the orthogonal complement
$$
S^F=\{x\in\C^m: \forall y\in S, F(x,y)=0\}.
$$
Then $S^F$ is a complex subspace of $\C^m$.
Similarly, we can define $S^{\Re F}$.
We note that $S^F\subset S^{\Re F}$,
and if $S$ is a complex subspace,
then $S^F=S^{\Re F}$.
For $a,b\in\C^m$, we define complex subspaces
\begin{equation}\label{T(a,b)}
T(a,b)=\{y\in\C^m: \exists x\in\C^m: F(x,b)+F(a,y)=0\},\quad
T(a)=T(a,a).
\end{equation}

We observe
\begin{equation}\label{g3-0}
T(a,b)\supset a^F+\C b, \quad
T(a,b)^F\subset (a^F)^F\cap b^F, \quad
T(a)^F\subset (a^F)^F\cap a^F
\end{equation}

If a certain property holds for every element $x$
in an open dense set of a space $X$, then we will say that
the property holds for a {\it generic} element of $X$.

\begin{definition}\label{T-nondeg-definition}
We say that a nondegenerate form $F$ is
{\em T-nondegenerate} if for generic $a\in \C^m$,
we have $T(a)^F=0$.
\end{definition}

\begin{thm}\label{g3=0}
Let $F$ be a T-nondegenerate form.
Then $\g_3=0$.
\end{thm}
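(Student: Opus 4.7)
The plan is to take an arbitrary $X = (f_4, g_5) \in \g_3$ and show $X = 0$. By the $z$-degree constraints (\ref{z-degree}) combined with the weighted homogeneity of $f_4, g_5$ (degrees $4$ and $5$ respectively), the only surviving monomial bi-degrees are $(2,1)$ and $(0,2)$ for $f_4$ and $(1,2)$ for $g_5$, so the components decompose as
\[
f_4(z, w) = \Phi(z, z; w) + \Psi(w, w), \qquad g_5(z, w) = \Gamma(z; w, w),
\]
where $\Phi \colon (\C^m)^2 \times \C^k \to \C^m$ is symmetric in its $z$-arguments and linear in $w$, $\Psi \colon (\C^k)^2 \to \C^m$ is symmetric bilinear, and $\Gamma \colon \C^m \times (\C^k)^2 \to \C^k$ is linear in $z$ and symmetric bilinear in $w$. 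The unknowns are thus the three tensors $\Phi, \Psi, \Gamma$.

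Next, I would substitute $w = u + iF(z,z)$ into (\ref{g_p}) with $p = 3$ and expand, obtaining a polynomial identity of total weight $5$ in the independent variables $z, \bar z, u$ that must vanish identically. Decomposing by multi-degree $(a, b, c)$ in $(z, \bar z, u)$ (so $a + b + 2c = 5$), each component yields a separate tensor identity. A direct computation at $c = 2$ couples only $\Gamma$ and $\Psi$ and, after separating holomorphic and antiholomorphic pieces in $z$, forces $\Gamma(z; u, u) = 2iF(z, \bar\Psi(u,u))$, expressing $\Gamma$ in terms of $\Psi$. The remaining components then impose linear constraints on $\Phi$ and $\Psi$ alone.

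The crucial step, where T-nondegeneracy enters, is to locate a lower $u$-degree component whose rearrangement matches the shape of the defining relation (\ref{T(a,b)}) for $T$. After isolating the high-$z$-degree piece and back-substituting the $c=2$ relation, the identity can be rewritten as
\[
F(x(a,b), b) + F(a, y(a,b)) = 0 \qquad \text{for all } a, b \in \C^m,
\]
where $x, y$ are linear in $(a,b)$ and in the coefficients of $\Phi$ (and possibly $\Psi$). By (\ref{T(a,b)}), this places $y(a,b) \in T(a, b)$. Specializing $b = a$ and pairing with a neighboring component then forces a certain vector built from the unknown tensors to lie in $T(a)^F$. The T-nondegeneracy hypothesis $T(a)^F = 0$ for generic $a$ makes this vector vanish; unravelling the linear system kills $\Phi$. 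A cascading argument using the observations (\ref{g3-0}) then kills $\Psi$, and the $c = 2$ relation finally kills $\Gamma$, giving $X = 0$.

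The main obstacle is purely organizational. The substitution $w = u + iF(z,z)$ entangles $u$-degree with $(z, \bar z)$-degree, so the multi-degree decomposition has many interacting pieces and the tensors $\Phi, \Psi, \Gamma$ contribute to overlapping components. The delicate point is to identify which component, after rearrangement, exactly matches the shape of the relation (\ref{T(a,b)}) defining $T$; once this matching is spotted, the T-nondegeneracy hypothesis delivers the conclusion immediately, as anticipated by Definition~\ref{T-nondeg-definition}.
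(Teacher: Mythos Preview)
Your setup is correct and matches the paper's: the decomposition $f_4=\Phi(z,z;w)+\Psi(w,w)$, $g_5=\Gamma(z;w,w)$ is exactly the paper's $f=A(z,z,w)+B(w,w)$, $g=2iF(z,B(\bar w,\bar w))$, and your $c=2$ relation is precisely that last formula. But two substantive gaps remain.

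First, the passage from the raw identities to a vector lying in $T(a)^F$ is the heart of the matter, and ``pairing with a neighboring component'' does not supply it. What the paper isolates as Lemma~\ref{g3-lemma} is this: from a relation $F(\phi(F(a,b)),b)=0$ valid for \emph{all} $a,b\in\C^m$, one perturbs $a'=a+tx$, $b'=b+ty$ with $F(x,b)+F(a,y)=0$ (so that $y\in T(a,b)$ by definition), observes $F(a',b')=F(a,b)+O(t^2)$, and extracts the $O(t)$ term to obtain $F(\phi(F(a,b)),y)=0$ for every $y\in T(a,b)$, i.e.\ $\phi(F(a,b))\in T(a,b)^F$. The multi-degree components of (\ref{g_p}) only deliver identities of the shape $F(\phi(F(\cdot,b)),b)=0$; the orthogonality to all of $T(a,b)$ requires this extra perturbation step, which your outline does not mention.

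Second, your elimination order is backwards. The direct constraint on $\Phi$ coming out of the expansion is (\ref{g3-1}), namely $F(\Phi(z,z;F(z,b)),b)=0$, where the first argument of $F(\cdot,b)$ is tied to the $z$ in $\Phi$; Lemma~\ref{g3-lemma} does not apply because varying $a$ would move those $z$'s as well. One must first apply the lemma to $\phi(w)=\Psi(w,w)$ (using (\ref{g3-3})) to obtain $\Psi(F(b,a),F(b,z))=0$, substitute this back into (\ref{g3-2}) to upgrade the $\Phi$-constraint to $F(\Phi(z,z;F(a,b)),b)=0$ with $a,b,z$ now \emph{independent}, and only then kill $\Phi$. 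A third application of the lemma, to $\phi(w)=\Psi(u,w)$ with $u$ fixed, is still needed to finish off $\Psi$. So the lemma is invoked three times in the order $\Psi$-partial $\to$ $\Phi$ $\to$ $\Psi$-full, not $\Phi\to\Psi$ as you propose.
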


In the proof, we will apply the following lemma
three times.

\begin{lemma}\label{g3-lemma}
Let $\phi:\C^k\to\C^m$ be a $C^1$ mapping.
Suppose for every $a,b\in\C^m$, we have $F(\phi(F(a,b)),b)=0$.
Then for every $a,b\in\C^m$, we have
$\phi(F(a,b))\in T(a,b)^F$.
In particular, for every $a\in\C^m$, we have
$\phi(F(a,a))\in T(a)^F$.
Hence, if $F$ is T-nondegenerate, then for every
$a\in\C^m$, we have $\phi(F(a,a))=0$.
\end{lemma}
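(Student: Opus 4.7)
The plan is to differentiate the hypothesis $F(\phi(F(a,b)),b)=0$ separately in $a$ and in $b$, and then combine the two resulting identities using the description of $T(a,b)$. Since $F$ is $\C$-linear in its first slot and conjugate-linear in its second, for a real parameter $t$ one has $F(a+ta',b)=F(a,b)+tF(a',b)$ and $F(a,b+tb')=F(a,b)+tF(a,b')$, so directional derivatives can be computed by the ordinary chain rule even though $\phi$ is only a $C^1$ real map $\C^k\to\C^m$.

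Holding $b$ fixed and differentiating the hypothesis in $a$ along a direction $a'$ yields
\begin{equation*}
F(d\phi_{F(a,b)}(F(a',b)),b)=0\quad\text{for all }a,a',b\in\C^m.
\end{equation*}
Differentiating in $b$ along $b'$, where both occurrences of $b$ contribute, yields
\begin{equation*}
F(d\phi_{F(a,b)}(F(a,b')),b)+F(\phi(F(a,b)),b')=0,
\end{equation*}
which I rearrange as
\begin{equation*}
F(\phi(F(a,b)),b')=-F(d\phi_{F(a,b)}(F(a,b')),b)\quad\text{for all }a,b,b'\in\C^m.
\end{equation*}

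Given $y\in T(a,b)$, the definition (\ref{T(a,b)}) furnishes $x\in\C^m$ with $F(x,b)=-F(a,y)$. Setting $b'=y$ in the second identity and substituting for $F(a,y)$ gives
\begin{equation*}
F(\phi(F(a,b)),y)=-F(d\phi_{F(a,b)}(F(a,y)),b)=F(d\phi_{F(a,b)}(F(x,b)),b),
\end{equation*}
and the last quantity vanishes by the first identity applied with $a'=x$. Hence $F(\phi(F(a,b)),y)=0$ for every $y\in T(a,b)$, i.e.\ $\phi(F(a,b))\in T(a,b)^F$. Specialising to $b=a$ gives $\phi(F(a,a))\in T(a)^F$; under $T$-nondegeneracy $T(a)^F=0$ on an open dense subset of $\C^m$, so $\phi(F(a,a))=0$ there, and continuity of $a\mapsto\phi(F(a,a))$ extends this to all of $\C^m$.

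The only real obstacle is conceptual rather than computational: one must notice that the subspace $T(a,b)$ is designed precisely so that the auxiliary vector $x$ in its definition converts the term $F(d\phi_{F(a,b)}(F(a,y)),b)$ appearing in the $b$-derivative identity into $F(d\phi_{F(a,b)}(F(x,b)),b)$, which is exactly the expression killed by the $a$-derivative identity. After this observation the rest is bookkeeping.
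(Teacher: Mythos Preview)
Your proof is correct and follows essentially the same approach as the paper's: both extract first-order information from the hypothesis and use the defining relation of $T(a,b)$ to conclude $F(\phi(F(a,b)),y)=0$. The only cosmetic difference is that the paper perturbs $(a,b)$ simultaneously along a pair $(x,y)$ with $F(x,b)+F(a,y)=0$, so that $F(a+tx,b+ty)=F(a,b)+O(t^2)$ and the $d\phi$ contribution never appears, whereas you differentiate separately in $a$ and in $b$ and then cancel the $d\phi$ terms by hand.
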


\begin{proof}
Let $a, b, x, y\in\C^m$,
let $F(x,b)+F(a,y)=0$, so $y\in T(a,b)$.
For $t\in\R$, we put
$$
a'=a+tx,\quad
b'=b+ty.
$$
Then we have
\begin{align*}
&F(a',b')=F(a,b)+t(F(x,b)+F(a,y))+t^2 F(x,y)
=F(a,b)+O(t^2),\\
&\phi(F(a',b'))=\phi(F(a,b))+O(t^2).
\end{align*}
By the hypothesis we have
\begin{align*}
0 &=F(\phi(F(a',b')),b')\\
&=F(\phi(F(a,b))+O(t^2),b+ty)\\
&=F(\phi(F(a,b),b)+tF(\phi(F(a,b)),y)+O(t^2).
\end{align*}
Then $F(\phi(F(a,b)),y)=0$. Since it holds for every
$y\in T(a,b)$, we have $\phi(F(a,b))\in T(a,b)^F$,
as desired.
\end{proof}

\begin{proof}[Proof of Theorem \ref{g3=0}]
An element $(f,g)\in\g_3$ has the following form
(see \cite{Beloshapka1988, BC-M-survey})
\begin{align*}
& f(z,w)=A(z,z,w)+B(w,w),\\
& g(z,w)=2iF(z, B(\bar w, \bar w)).
\end{align*}
Here $A$ and $B$ are complex multilinear forms
such that
$A$ is symmetric in the first two arguments
and $B$ is symmetric. They are characterized by
the following equations:
\begin{align}
& F(A(z,z,F(z,b)),b)=0, \label{g3-1}\\
& F(A(z,z,w),b)=4iF(z, B(\bar w, F(b,z))).\label{g3-2}
\end{align}
Here $b,z\in\C^m$, and $w\in\C^k$ are arbitrary.
In fact, $A(z,z,F(z,b))=0$, but we do not
need this fact here.
We would like to show that if $F$ is T-nondegenerate,
then $A=0$ and $B=0$.

We first plug $w=F(z,b)$ in (\ref{g3-2})
and using (\ref{g3-1}) we obtain
\begin{equation}\label{g3-3}
F(z, B(F(b,z),F(b,z)))=0.
\end{equation}
Put $\phi(w)=B(w,w)$. Then by (\ref{g3-3})
we have $F(\phi(F(b,z)),z)=0$ for all $b,z\in\C^m$.
Since $F$ is T-nondegenerate, by Lemma \ref{g3-lemma},
we have $\phi(F(b,b))=0$, that is,
\begin{equation*}
B(F(b,b),F(b,b))=0.
\end{equation*}
By polarization
\begin{equation*}
B(F(b,z),F(b,z))=0.
\end{equation*}
Since $B$ is symmetric, we have
\begin{equation}\label{g3-4}
B(F(b,a),F(b,z))=0
\end{equation}
for all $a,b,z\in\C^m$.
Plug $w=F(a,b)$ in (\ref{g3-2}). Then by (\ref{g3-4})
we have
\begin{equation}\label{g3-5}
F(A(z,z,F(a,b)),b)=4iF(z, B(F(b,a), F(b,z)))=0.
\end{equation}
Put $\phi(w)=A(z,z,w)$, where $z\in\C^m$ is constant.
Then by (\ref{g3-5}) we have
$F(\phi(F(a,b)),b)=0$. By Lemma \ref{g3-lemma}, we have
$\phi(F(a,a))=0$, that is,
$$
A(z,z,F(a,a))=0.
$$
Since $F$ is nondegenerate, the values $F(a,a)$ span
all of $\C^k$, hence $A=0$. Then by (\ref{g3-2})
we have
\begin{equation}\label{g3-6}
F(z, B(u,F(b,z)))=0
\end{equation}
for all $b,z\in\C^m$ and $u\in\C^k$.
Put $\phi(w)=B(u,w)$, where $u\in\C^k$ is constant.
Then by (\ref{g3-6}), we have
$F(\phi(F(b,z)),z)=0$. By Lemma \ref{g3-lemma},
we have $\phi(F(b,b))=0$, that is,
$$
B(u,F(b,b))=0.
$$
Hence $B=0$. Now $A=0$, $B=0$, $f=0$, $g=0$,
and $\g_3=0$, as desired.
\end{proof}

\section{Other conditions}

Let $F$ and $A_j$ be as above.
Let $F$ be nondegenerate. Recall that $F$ is strongly
pseudoconvex if there is $c\in\R^k$ such that
$A=\sum_{j=1}^{k} c_j A_j>0$, positive definite.
In \cite{T2020} we prove 2-jet determination for
strongly pseudoconvex CR manifolds.
We recover this result using the following.
\begin{prop}
Let the form $F$ be strongly pseudoconvex.
Then $F$ is T-nondegenerate, hence $\g_3=0$.
\end{prop}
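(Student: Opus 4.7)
The plan is to establish the stronger pointwise assertion $T(a)^F=0$ for \emph{every} $a\in\C^m$, which is strictly stronger than T-nondegeneracy; then Theorem \ref{g3=0} immediately yields $\g_3=0$. The main device is the positive definite Hermitian inner product
\[
\<\<x,y\>\> \;:=\; c\cdot F(x,y) \;=\; \<Ax,y\>
\]
on $\C^m$, where $A=\sum c_j A_j>0$ is the positive matrix furnished by the strong pseudoconvexity hypothesis. Write $\perp$ for orthogonality relative to $\<\<\cdot,\cdot\>\>$.

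I would carry out the argument in three short steps. First, the third inclusion in (\ref{g3-0}) gives $T(a)^F\subset (a^F)^F\cap a^F$, so it suffices to prove $(a^F)^F\cap a^F=0$. Second, for any subset $S\subset\C^m$ the $F$-orthogonal complement sits inside the Hermitian orthogonal complement, $S^F\subset S^\perp$, simply because the vector identity $F(x,y)=0$ forces the scalar identity $\<\<x,y\>\>=c\cdot F(x,y)=0$; applied with $S=a^F$ this produces $(a^F)^F\subset (a^F)^\perp$. Third, positive definiteness of $\<\<\cdot,\cdot\>\>$ forces $V\cap V^\perp=0$ for every complex subspace $V\subset\C^m$; specializing to $V=a^F$ yields $a^F\cap (a^F)^\perp=0$. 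Chaining the three inclusions gives
\[
T(a)^F \;\subset\; (a^F)^F\cap a^F \;\subset\; (a^F)^\perp\cap a^F \;=\; 0.
\]

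There is essentially no obstacle: the proof reduces entirely to the fact that a positive definite Hermitian form has trivial intersection with each of its own orthogonal complements, combined with the already recorded inclusion (\ref{g3-0}). Note that the same chain shows $T(a)^F=0$ for \emph{every} $a\in\C^m$, which is strictly stronger than the generic vanishing required by Definition \ref{T-nondeg-definition}; in particular no openness or density argument is needed to conclude that $F$ is T-nondegenerate.
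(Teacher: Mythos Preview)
Your proof is correct and follows essentially the same route as the paper. The paper argues that if $z\in(a^F)^F\cap a^F$ then $F(z,z)=0$, whence $z=0$ by strong pseudoconvexity, and then invokes the inclusion $T(a)^F\subset(a^F)^F\cap a^F$ from (\ref{g3-0}); your version just makes the positive Hermitian form $\langle Ax,y\rangle$ explicit and routes through the intermediate complement $(a^F)^\perp$, which amounts to the same observation.
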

\begin{proof}
If $F$ is strongly pseudoconvex, then for $z\ne 0$ we have
$F(z,z)\ne 0$. Then we have $(a^F)^F\cap a^F=0$, because
if $z\in (a^F)^F\cap a^F$, then $F(z,z)=0$. Now the result reduces to the following result.
\end{proof}

\begin{prop}\label{Prop-orth-compl-imply-T}
Suppose for generic $a\in\C^m$, we have $(a^F)^F\cap a^F=0$. Then $F$ is T-nondegenerate.
\end{prop}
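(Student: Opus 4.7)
The plan is to observe that the proposition is essentially a direct consequence of the third inclusion in (\ref{g3-0}). Since that inclusion reads
$$
T(a)^F \subset (a^F)^F \cap a^F,
$$
the hypothesis that the right-hand side vanishes for generic $a \in \C^m$ immediately forces $T(a)^F = 0$ for generic $a$, which is exactly Definition \ref{T-nondeg-definition}. So there is essentially nothing to prove beyond invoking (\ref{g3-0}).

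If any justification is needed at all, it is of the inclusion $T(a)^F \subset (a^F)^F \cap a^F$ itself, which in turn comes from the first inclusion $T(a,b) \supset a^F + \C b$ of (\ref{g3-0}) by specializing $b = a$ and taking $F$-orthogonal complement (using that $F$-orthogonality reverses inclusions and that $(a^F + \C a)^F = (a^F)^F \cap a^F$). To see $T(a) \supset a^F + \C a$, note that for $y = z + \lambda a$ with $z \in a^F$ one has $F(a,y) = \bar\lambda F(a,a)$, which is cancelled in $F(x,a) + F(a,y) = 0$ by taking $x = -\bar\lambda a$, placing $y$ in $T(a)$.

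There is no real obstacle: the proposition is a formal chain of inclusions together with the definition, and all the nontrivial content is already packaged in the observations (\ref{g3-0}) stated before the definition of T-nondegeneracy. One could write the whole proof as: ``By (\ref{g3-0}), $T(a)^F \subset (a^F)^F \cap a^F = 0$ for generic $a$, hence $F$ is T-nondegenerate.''
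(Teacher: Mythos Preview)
Your proof is correct and takes essentially the same approach as the paper: the paper's proof is literally the one-line argument you give at the end, namely ``By (\ref{g3-0}), we have $T(a)^F\subset (a^F)^F\cap a^F=0$. Hence $F$ is T-nondegenerate.'' Your additional justification of the inclusion $T(a)\supset a^F+\C a$ is correct but goes beyond what the paper spells out, since (\ref{g3-0}) is simply stated as an observation without proof.
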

\begin{proof}
By (\ref{g3-0}), we have
$T(a)^F\subset (a^F)^F\cap a^F=0$.
Hence $F$ is T-nondegenerate, as desired.
\end{proof}

Bertrand, Blanc-Centi, and Meylan
\cite{BBM2019},
prove 2-jet determination for
so called {\em fully nondegenerate} CR manifolds
in the smooth case.
Their condition, in particular, implies that there is
$a\in\C^m$ such that
the vectors $(A_j a)_{j=1}^k$ are $\C$-linear independent.
Bertrand and Meylan \cite{BM2022} show that this weaker condition alone implies 2-jet determination in the
analytic case. We recover these results here by
proving the following.

\begin{prop}\label{Fully}
Suppose there is $a\in\C^m$ such that
the vectors $(A_j a)_{j=1}^k$ are $\C$-linear independent.
Then $F$ is T-nondegenerate, hence $\g_3=0$.
\end{prop}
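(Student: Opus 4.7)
My plan is to show directly that $T(a)=\C^m$ for generic $a$, after which T-nondegeneracy follows formally from the nondegeneracy of $F$. The key observation is that $T(a)$, as defined in (\ref{T(a,b)}), is controlled by the image of the $\C$-linear map $L_a\colon \C^m\to\C^k$, $L_a(x)=F(x,a)$; using the Hermitianity of the $A_j$, we may write $L_a(x)=(\<x,A_j a\>)_{j=1}^k$, which is surjective precisely when $A_1 a,\ldots,A_k a$ are $\C$-linearly independent.

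First I would set $U=\{a\in\C^m : A_1 a,\ldots,A_k a \text{ are } \C\text{-linearly independent}\}$. This is a Zariski open condition (the $m\times k$ matrix $[A_1 a\mid\cdots\mid A_k a]$ must have full column rank), and it is nonempty by hypothesis, so $U$ is open and dense in $\C^m$.

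Next, for $a\in U$ the map $L_a$ is surjective, so $V(a):=L_a(\C^m)=\C^k$. Unpacking (\ref{T(a,b)}), $y\in T(a)$ iff $-F(a,y)\in V(a)$, which is automatic when $V(a)=\C^k$; hence $T(a)=\C^m$. Consequently $T(a)^F=\{z\in\C^m : F(z,y)=0 \text{ for all } y\in\C^m\}=\bigcap_{j=1}^k \ker A_j$, and this is $0$ by the nondegeneracy of $F$. Thus $T(a)^F=0$ for every $a\in U$, so $F$ is T-nondegenerate, and Theorem \ref{g3=0} yields $\g_3=0$.

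The only step worth flagging is the translation of nondegeneracy (b)---originally phrased as ``$F(z,\zeta)=0$ for all $z$ implies $\zeta=0$''---into the symmetric statement ``$F(z,y)=0$ for all $y$ implies $z=0$'' needed above. This is immediate from $\<A_j z,y\>=\<z,A_j y\>$, since both versions are equivalent to $\bigcap_j \ker A_j=0$; the rest is bookkeeping, and I do not see a substantive obstacle.
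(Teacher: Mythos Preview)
Your argument is correct and follows the same approach as the paper: the paper's proof simply records that for generic $a$ one has $F(\C^m,a)=\C^k$, whence $T(a)=\C^m$ and $T(a)^F=0$. Your write-up spells out explicitly why the linear-independence condition is Zariski open (hence generic), why it forces surjectivity of $L_a$, and why $(\C^m)^F=0$ via the symmetric reading of nondegeneracy (b); these are exactly the details the paper leaves implicit.
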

\begin{proof}
By the hypotheses, for generic $a\in\C^m$, we have
$F(\C^m,a)=\C^k$. Then $T(a)=\C^m$, and $T(a)^F=0$,
hence $F$ is T-nondegenerate.
\end{proof}

Beloshapka \cite{Beloshapka2022} proves a sufficient
condition for $\g_3=0$ that in particular includes
the hypothesis that there is $a\in \C$ such that
the vectors $(A_j a)_{j=1}^k$
span $\C^m$ over $\C$. We observe that this hypothesis
alone suffices for $\g_3=0$.

\begin{prop}
Suppose that there is $a\in \C^m$ such that
the vectors $(A_j a)_{j=1}^k$
span $\C^m$ over $\C$.
Then $F$ is T-nondegenerate, hence $\g_3=0$.
\end{prop}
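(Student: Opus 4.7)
The plan is to show that the hypothesis gives $a^F = 0$ for a generic choice of $a$, and then use the containments recorded in (\ref{g3-0}) to conclude that $T(a)^F = 0$, so that $F$ is T-nondegenerate and Theorem \ref{g3=0} applies.

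First I would translate the hypothesis into the language of the orthogonal complement $a^F$. Since the $A_j$ are Hermitian, we have $F_j(x,a) = \<A_j x, a\> = \<x, A_j a\>$. Therefore $a^F$ equals the Hermitian-orthogonal complement in $\C^m$ of the $\C$-linear span of the vectors $\{A_j a : 1\le j\le k\}$. In particular, as soon as these vectors span $\C^m$, we obtain $a^F = 0$.

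Next I would verify that the spanning condition holds not only at the distinguished $a$ but on a Zariski open dense subset of $\C^m$. The map $a \mapsto (A_1 a,\ldots, A_k a)$ is complex linear, and the spanning condition is the nonvanishing of at least one $m\times m$ minor of the matrix whose columns are the $A_j a$. This is a polynomial condition in $a$, so the failure locus is a proper algebraic subvariety of $\C^m$ as soon as it is proper at all; the hypothesis guarantees it is proper. Hence the spanning condition, and therefore $a^F = 0$, holds for generic $a\in\C^m$.

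Finally I would combine this with (\ref{g3-0}). For any such generic $a$,
$$
T(a)^F \subset (a^F)^F \cap a^F = (0)^F \cap 0 = 0,
$$
so $T(a)^F = 0$. Thus $F$ is T-nondegenerate in the sense of Definition \ref{T-nondeg-definition}, and Theorem \ref{g3=0} immediately gives $\g_3 = 0$. There is no real obstacle here; the only point that needs a moment of care is the Hermitian duality identification of $a^F$ with the orthogonal complement of $\Span\{A_j a\}$, which uses that the $A_j$ are Hermitian with respect to the same inner product used to define $\<\,,\,\>$.
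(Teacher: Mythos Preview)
Your argument is correct and follows essentially the same route as the paper: you show that the spanning hypothesis is equivalent to $a^F=0$ for generic $a$, and then feed this into the containment $T(a)^F\subset (a^F)^F\cap a^F$ from (\ref{g3-0}). The paper phrases the last step as an appeal to Proposition~\ref{Prop-orth-compl-imply-T}, but that proposition is proved by exactly the same one-line use of (\ref{g3-0}), so the two proofs coincide.
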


\begin{proof}
The hypothesis is equivalent to the fact that
for generic $a\in \C^m$, we have $a^F=0$.
Then this proposition is a special
case of Proposition \ref{Prop-orth-compl-imply-T}.
\end{proof}

Suppose there is $c\in\R^k$ such that
$A=\sum_{j=1}^{k} c_j A_j$ is nonsingular,
that is, $F$ is strongly nondegenerate.
Let $a\in\C^m$.
Following Bertrand and Meylan \cite{BM2021}, we introduce
a $m\times k$ matrix
$$
D=(A_1a,\ldots,A_ka)
$$
and a $k\times k$ matrix
\begin{equation}\label{matrix-B}
B=D^*A^{-1}D,\quad
B_{lj}=\<A_lA^{-1}A_ja,a\>.
\end{equation}

\begin{definition}[\cite{BM2021}]
$F$ is called {\em D-nondegenerate} if there exist
$c\in\R^k$ and $a\in\C^m$ such that the matrix
$\Re B:=\frac{1}{2}(B+\bar{B})$ is nonsingular.
\end{definition}

Bertrand and Meylan \cite{BM2021}
prove 2-jet determination for
D-nondegenerate CR manifolds.
We recover their result by proving the following.

\begin{prop}\label{D-nondeg}
Suppose $F$ is D-nondegenerate.
Then $F$ is T-nondegenerate, therefore, $\g_3=0$.
\end{prop}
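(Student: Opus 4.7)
The plan is to read the condition $z\in T(a)^F$ through the matrices $D$ and $B=D^*A^{-1}D$, and then exploit D-nondegeneracy by a short conjugation argument applied to the symmetric part of $B$.

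First, I translate everything into matrix form. Since each $A_j$ is Hermitian, a direct calculation gives $F(x,a)=D^*x$ and $F(a,y)=\bar{D^*y}$, hence $a^F=\ker D^*$. With $V:={\rm Range}(D^*)\subset\C^k$, the equation defining $T(a)$ becomes $\bar{D^*y}\in V$, equivalently $D^*y\in V\cap\bar V$ (the inclusion $D^*y\in V$ being automatic). Writing $T(a)=(D^*)^{-1}(V\cap\bar V)$ and using the standard linear-algebra identity $((D^*)^{-1}(W))^\perp=D(W^\perp)$ for $W\subset V$, one gets
\[
T(a)^\perp=D\bigl(\bar{\ker D}\bigr).
\]
Since $z\in T(a)^F$ means $A_jz\perp T(a)$ for every $j$, it rewrites as: for each $j$, $A_jz=D\beta^j$ with $\beta^j\in\bar{\ker D}$.

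Second, I use strong nondegeneracy to collapse these $k$ equations into one. Choose $c\in\R^k$ with $A=\sum c_jA_j$ nonsingular; then $Az=\sum c_jA_jz=D\gamma$ where $\gamma=\sum c_j\beta^j\in\bar{\ker D}$, so $z=A^{-1}D\gamma$. Moreover $T(a)\supset a^F+\C a\supset a^F$ forces $T(a)^F\subset a^F$, meaning $D^*z=0$; hence $B\gamma=D^*A^{-1}D\gamma=D^*z=0$.

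Third, D-nondegeneracy is activated by complex conjugation. Since $\gamma\in\bar{\ker D}$, we have $\bar\gamma\in\ker D\subset\ker B$, so $B\bar\gamma=0$; taking complex conjugates gives $\bar B\gamma=0$. Adding this to $B\gamma=0$ yields $(B+\bar B)\gamma=2(\Re B)\gamma=0$. At any $a$ where $\Re B$ is nonsingular this forces $\gamma=0$ and hence $z=A^{-1}D\gamma=0$. The entries of $\Re B$ are polynomials in $a$ and $\bar a$, so the D-nondegeneracy hypothesis guarantees that $\Re B(a)$ is nonsingular on an open dense subset of $\C^m$; for any such (generic) $a$, $T(a)^F=0$. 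Thus $F$ is T-nondegenerate and $\g_3=0$ by Theorem \ref{g3=0}.

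The main obstacle is to notice that $T(a)^F$ carries strictly more information than $(a^F)^F\cap a^F$: namely the additional constraint $\gamma\in\bar{\ker D}$, coming from the identification $T(a)^\perp=D(\bar{\ker D})$ rather than the weaker $D$-invariance encoded in $(a^F)^F\cap a^F$. This enrichment is exactly what puts \emph{both} $B\gamma=0$ and $\bar B\gamma=0$ at our disposal, and thereby lets the hypothesis ``$\Re B$ nonsingular'' (rather than ``$B$ nonsingular'') conclude $\gamma=0$. A more naive attempt to show $(a^F)^F\cap a^F=0$ and invoke Proposition \ref{Prop-orth-compl-imply-T} would instead require $B$ itself to be invertible, which can fail in cases genuinely covered by D-nondegeneracy.
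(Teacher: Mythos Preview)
Your argument is correct. The identity $T(a)^\perp=D(\overline{\ker D})$ is right (using $(D^*)^{-1}(V\cap\bar V)^\perp=D((V\cap\bar V)^\perp)=D(\ker D+\overline{\ker D})=D(\overline{\ker D})$), and the key step --- combining $B\gamma=0$ from $z\in a^F$ with $\bar B\gamma=0$ from $\bar\gamma\in\ker D$ --- cleanly produces $(\Re B)\gamma=0$.

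Your route differs from the paper's. The paper does not compute $T(a)^\perp$ directly; instead it passes through the real form $\Re F$, proving first that $T(a)^F\subset (a^{\Re F})^{\Re F}\cap a^{\Re F}$, and then showing the latter vanishes by working with the \emph{real} span $V=\Span_\R\{A_ja\}$: one gets $Az=D\lambda$ with $\lambda\in\R^k$, and the condition $z\in a^{\Re F}$ reads $\Re(B\lambda)=(\Re B)\lambda=0$ directly because $\lambda$ is real. So both arguments land on $(\Re B)(\cdot)=0$, but for structurally different reasons: the paper forces the coefficient vector to be real via the $\Re F$ detour, while you keep a complex $\gamma$ and manufacture the second equation $\bar B\gamma=0$ from the constraint $\gamma\in\overline{\ker D}$ that your explicit description of $T(a)^\perp$ provides. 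Your approach is more self-contained and avoids introducing $\Re F$; the paper's approach isolates the intermediate fact $(a^{\Re F})^{\Re F}\cap a^{\Re F}=0$ as a separate lemma, which is a slightly stronger statement than $T(a)^F=0$ and may be of independent use.
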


\begin{lemma}\label{aReF-ReF}
$T(a)^F\subset(a^{\Re F})^{\Re F}\cap a^{\Re F}$.
\end{lemma}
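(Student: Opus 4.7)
The claim decomposes naturally into two containments: $T(a)^F \subset a^{\Re F}$ and $T(a)^F \subset (a^{\Re F})^{\Re F}$. The first is essentially immediate from (\ref{g3-0}) combined with the general observation (recorded in the text just before Definition \ref{T-nondeg-definition}) that $S^F \subset S^{\Re F}$. The plan for the second, substantive, part is to prove the reverse inclusion $a^{\Re F} \subset T(a)$; once that is in hand, passing to $F$-orthogonal complements reverses the inclusion to give $T(a)^F \subset (a^{\Re F})^F$, and a final application of $S^F \subset S^{\Re F}$ completes the argument.

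For the easy inclusion I would simply note that $a \in T(a)$, e.g.\ from $\C a \subset T(a)$ in (\ref{g3-0}), so any $x \in T(a)^F$ satisfies $F(x,a)=0$, hence $x \in a^F \subset a^{\Re F}$.

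The main step is the containment $a^{\Re F} \subset T(a)$. Here I would exploit Hermitian symmetry, namely $F(y,a)=\overline{F(a,y)}$, to observe that for any $y\in a^{\Re F}$ one has $F(y,a)+F(a,y)=2\Re F(a,y)=0$. Thus the choice $x=y$ already witnesses membership of $y$ in $T(a)$ via the defining equation (\ref{T(a,b)}). Reversing inclusions then gives $T(a)^F \subset (a^{\Re F})^F$, and $(a^{\Re F})^F \subset (a^{\Re F})^{\Re F}$ finishes the proof.

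I do not anticipate any real obstacle. The only conceptual point worth flagging is that $a^{\Re F}$ is in general only a real subspace rather than a complex one, so one cannot invoke the equality $S^F = S^{\Re F}$ available for complex subspaces; fortunately only the one-sided inclusion $S^F \subset S^{\Re F}$ is needed at the final step. The heart of the matter is the elementary identity $F(y,a)+F(a,y)=2\Re F(a,y)$, which makes the defining equation for $T(a)$ automatically solvable by $x=y$ whenever $y\in a^{\Re F}$.
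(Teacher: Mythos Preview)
Your proposal is correct and follows essentially the same route as the paper's proof: both establish $a^{\Re F}\subset T(a)$ by taking $x=y$ in the defining relation (\ref{T(a,b)}) and using $F(y,a)+F(a,y)=2\Re F(y,a)$, then pass to $F$-orthogonal complements and apply $S^F\subset S^{\Re F}$; the inclusion $T(a)^F\subset a^{\Re F}$ is handled identically via $a\in T(a)$.
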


To compare with (\ref{g3-0}), we observe that
$(a^{\Re F})^{\Re F}\subset (a^F)^F$.
Indeed, $a^F\subset a^{\Re F}$ implies
$(a^{\Re F})^{\Re F}\subset(a^F)^{\Re F}$.
Since $a^F$ is a complex subspace, we have
$(a^F)^{\Re F}=(a^F)^F$,
hence $(a^{\Re F})^{\Re F}\subset (a^F)^F$.

\begin{proof}
By putting $a=b$ and $x=y$ in (\ref{T(a,b)}),
we obtain $a^{\Re F}\subset T(a)$. Then
\begin{equation*}
T(a)^F\subset(a^{\Re F})^F\subset (a^{\Re F})^{\Re F}.
\end{equation*}
Since $a\in T(a)$, we have
$T(a)^F\subset a^F\subset a^{\Re F}$, and the lemma
follows.
\end{proof}

\begin{lemma}\label{ReB-implies}
Suppose $\Re B$ is nonsingular.
Then $(a^{\Re F})^{\Re F}\cap a^{\Re F}=0$.
\end{lemma}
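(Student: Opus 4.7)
The plan is to translate both conditions defining $y\in (a^{\Re F})^{\Re F}\cap a^{\Re F}$ into concrete linear relations involving the matrix $D=(A_1a,\ldots,A_ka)$, and then use the invertibility of $\Re B$ to force $y=0$.

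First, I will unwind $a^{\Re F}$: because each $A_l$ is Hermitian, $F_l(y,a)=\<A_l y,a\>=\<y,A_l a\>$, so $y\in a^{\Re F}$ is precisely the condition $\Re(D^*y)=0\in\R^k$. Equivalently, $a^{\Re F}$ is the real-orthogonal complement of $\Span_\R\{A_1a,\ldots,A_ka\}$ inside the real inner product space $(\C^m,\Re\<\cdot,\cdot\>)\cong\R^{2m}$. Then I unwind $(a^{\Re F})^{\Re F}$: the requirement $\Re F(y,x)=0$ for all $x\in a^{\Re F}$ reads $\Re\<A_l y,x\>=0$ for every such $x$ and every $l$, which by the double real-orthogonal-complement identity forces $A_l y\in\Span_\R\{A_1a,\ldots,A_ka\}$ for each $l$. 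Hence there must exist \emph{real} scalars $c_{jl}$ with $A_l y=\sum_j c_{jl}A_j a$.

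Next I take $c\in\R^k$ for which $A=\sum_l c_l A_l$ is invertible (the same $c$ entering the definition of $B$). Multiplying the relations $A_l y=\sum_j c_{jl}A_j a$ by $c_l$ and summing over $l$ will give $Ay=Dd$, where $d\in\R^k$ has entries $d_j=\sum_l c_{jl}c_l$ and is therefore real; hence $y=A^{-1}Dd$. Substituting into the identity $\Re(D^*y)=0$ from the first step and using $D^*A^{-1}D=B$ produces $\Re(Bd)=0$; since $d$ is real this reads $(\Re B)d=0$, and the invertibility of $\Re B$ then forces $d=0$, hence $y=0$.

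The delicate point is the second step of the first paragraph: the scalars $c_{jl}$ must be \emph{real} (not complex), since they arise from orthogonality in the real inner-product space $\R^{2m}$, and it is exactly this reality that makes $\Re B$ — rather than $B$ itself — the correct matrix to invert in the final step. Everything else is a straightforward unwinding of definitions together with the double-orthogonal-complement identity in finite-dimensional real inner product spaces.
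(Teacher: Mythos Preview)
Your proof is correct and follows essentially the same approach as the paper: both identify $a^{\Re F}$ with the real-orthogonal complement of $V=\Span_\R\{A_ja\}$, characterize $(a^{\Re F})^{\Re F}$ by the condition $A_j y\in V$, then use $A=\sum c_lA_l$ to write $y=A^{-1}D\lambda$ with $\lambda\in\R^k$ and conclude $(\Re B)\lambda=0$. The only cosmetic difference is that you first record the full array of real scalars $c_{jl}$ before combining them into $d=\sum_l c_{jl}c_l$, whereas the paper introduces $\lambda$ directly from $Az\in V$.
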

\begin{proof}
Define $V=\Span_\R\{A_ja: 1\le j\le k\}$.
We claim that $a^{\Re F}=V^{\bot_\R}$, the orthogonal
complement of $V$ with respect to the standard
\emph{real} inner product $\Re\<.,.\>$.
Indeed, $z\in a^{\Re F}$ means exactly that
$\Re\<A_ja,z\>=0$ for every $j$, that is, $z\in V^{\bot_\R}$.

We now claim $z\in (a^{\Re F})^{\Re F}$ iff $A_j z\in V$
for every $j$. Indeed, it means that $\Re\<A_j z,x\>=0$
for every $1\le j\le k$ and $x\in a^{\Re F}=V^{\bot_\R}$.
That is, $A_jz\in (V^{\bot_\R})^{\bot_\R}=V$ for every $j$.

Let $z\in (a^{\Re F})^{\Re F}\cap a^{\Re F}$.
Since $z\in (a^{\Re F})^{\Re F}$, we have
$A_j z\in V$ for all $j$. Recall $A=\sum c_jA_j$, $c\in\R^k$.
Then $Az\in V$. Then there is $\lambda\in\R^k$
such that $Az=\sum \lambda_j A_ja$.
Then $z=\sum \lambda_j A^{-1}A_ja$.

Since $z\in a^{\Re F}$, we have
$\Re\<A_l z,a\>=0$ for every $l$.
Using the above expression for $z$, we obtain
$\Re\<\sum_j\lambda_jA_l A^{-1}A_ja,a\>=0$.
By (\ref{matrix-B})
we rewrite $\Re\sum_j B_{lj}\lambda_j=(\Re B) \lambda=0$.

Since $\Re B$ is nonsingular, $\lambda=0$. Then $z=0$,
hence the conclusion.
\end{proof}

\begin{proof}[Proof of Proposition \ref{D-nondeg}]
We observe that the condition $\det\Re B\ne0$ is stable
under small perturbations of $a\in\C^m$, therefore,
by Lemmas \ref{aReF-ReF} and \ref{ReB-implies}, we have
$T(a)^F=0$ for generic $a\in\C^m$,
that is, $F$ is T-nondegenerate, as desired.
\end{proof}

We finally observe that in Tanaka's \cite{Tanaka} cases
$k=m^2-1$ and $k= m^2$, nondegenerate CR manifolds are T-nondegenerate. Indeed, for $k=m^2-1$, one can see
that for generic $a\in\C^m$ we have $a^F=0$, so by
Proposition \ref{Prop-orth-compl-imply-T} the manifold
is T-nondegenerate. Then the case $k= m^2$ also falls into this category, but it is also strongly pseudoconvex.

\section{Codimension $k\le 3$}

By Tanaka \cite{Tanaka} and Chern and Moser \cite{Chern-Moser}, 2-jet determination
holds for codimension $k=1$.
Blanc-Centi and Meylan \cite{BC-M2022} prove 2-jet determination for holomorphic CR mappings for $k=2$.
Beloshapka \cite{Beloshapka2022} proves that $\g_3=0$ for $k=3$.
We recover these results here by proving the following.

\begin{prop}\label{k=<3}
Let $F$ be a nondegenerate quadric with $k\le 3$.
Then $F$ is T-nondegenerate, hence $\g_3=0$.
\end{prop}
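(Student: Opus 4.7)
The plan is a case analysis on $k\in\{1,2,3\}$, in each case reducing to Proposition \ref{Fully} (existence of $a$ with $(A_ja)_j$ $\C$-linearly independent) or to the spanning criterion (existence of $a$ with $(A_ja)_j$ spanning $\C^m$), which is a special case of Proposition \ref{Prop-orth-compl-imply-T}. For $k=1$, nondegeneracy condition (b) amounts to $\ker A_1=0$, so $A_1a\ne 0$ for generic $a$, supplying the Fully hypothesis. For $k=2$, I would argue by contradiction: if $\rank[A_1a\mid A_2a]\le 1$ for every $a$, then writing $A_2a=\lambda(a)A_1a$ on the open set $\{A_1a\ne 0\}$ and differentiating yields $(A_2-\lambda(a)A_1)v\in\C A_1a$ for all $v$. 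A constant $\lambda$ gives $A_2=\lambda A_1$, contradicting (a). A varying $\lambda$ takes infinitely many values, and by polynomiality of the $2\times 2$ minors in $\lambda$ one obtains $\rank(A_2-\lambda A_1)\le 1$ for all $\lambda\in\C$; evaluating at $\lambda=0$ and $\lambda\to\infty$ forces $A_1,A_2$ to be rank-$1$ Hermitian matrices $\pm v_iv_i^*$, and the proportionality $A_1a\parallel A_2a$ for generic $a$ forces $v_1\parallel v_2$, again contradicting (a).

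For $k=3$, the argument splits further on $m$. The case $m=1$ is vacuous since no three $\R$-linearly independent $1\times 1$ Hermitian matrices exist. For $m=2$, an extension of the $k=2$ pencil argument to a triple of Hermitian forms shows that either some $a$ makes $(A_ja)_j$ span $\C^2$ (applying the spanning criterion) or the three matrices satisfy a $\C$-linear dependence, contradicting (a). For $m\ge 3$, set $r=\max_a\dim_\C\Span_\C\{A_ja\}_j$. If $r=3$, apply Proposition \ref{Fully} directly. Otherwise $r\le 2$ and all $3\times 3$ minors of the $m\times 3$ matrix $[A_1a\mid A_2a\mid A_3a]$ vanish identically as cubic polynomials in $a$; a structural analysis, using the Hermiticity of the $A_j$ and the triviality of $\bigcap_j\ker A_j$ from (b), should force a $\C$-linear dependence among $A_1,A_2,A_3$, contradicting (a).

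The hard part is the final subcase: excluding nondegenerate Hermitian triples on $\C^m$ ($m\ge 3$) for which $(A_1a,A_2a,A_3a)$ is $\C$-linearly dependent for every $a$. This is a determinantal statement about rank-compressed linear families of $m\times 3$ matrices under a Hermitian symmetry constraint, and is where the codimension hypothesis $k\le 3$ plays its essential role.
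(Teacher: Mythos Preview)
Your strategy for $k\le 2$ and for the subcase $r=3$ of $k=3$ is fine and matches the paper's. The genuine gap is the subcase $k=3$, $m\ge 3$, $r\le 2$: you assert that this forces a $\C$-linear dependence among $A_1,A_2,A_3$, but that is false. Take $m=3$ and $A_j=iS_j$, where $S_1,S_2,S_3$ is the standard basis of real antisymmetric $3\times 3$ matrices. These $A_j$ are Hermitian, $\C$-linearly independent, and $\bigcap_j\ker A_j=0$, so $F$ is nondegenerate. Yet
\[
\det\bigl[A_1a\mid A_2a\mid A_3a\bigr]
=-i\det\begin{pmatrix}a_2&a_3&0\\-a_1&0&a_3\\0&-a_1&-a_2\end{pmatrix}
=-i\bigl(a_1a_2a_3-a_1a_2a_3\bigr)=0
\]
identically, so $r=2$. (This is the complexified identity that $e_1\times a,\,e_2\times a,\,e_3\times a$ all lie in $a^\perp$.) Hence no contradiction is available, and the ``structural analysis'' you postpone cannot succeed.

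The paper does not try to exclude $r=2$; it handles it. When $A_3a\in\Span_\C(A_1a,A_2a)$ for generic $a$, one has $a^F=a^{F'}$ with $F'=(F_1,F_2)$, and one checks $F'$ is still nondegenerate. The paper then invokes its stronger $k=2$ statement, Proposition~\ref{Prop-k=2}, which gives $(a^{F'})^{F'}\cap a^{F'}=0$ for generic $a$; this yields $(a^F)^F\cap a^F=0$ and hence T-nondegeneracy via Proposition~\ref{Prop-orth-compl-imply-T}. Note that your $k=2$ argument only reproduces Lemma~\ref{lemma-Aa-Ba-lin-indep} and concludes T-nondegeneracy through Proposition~\ref{Fully}; it does not establish the stronger orthogonality statement $(a^F)^F\cap a^F=0$, which is exactly what the reduction from $k=3$ to $k=2$ needs. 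In the $\mathfrak{so}(3)$ example above, $a^F$ is one-dimensional for generic $a$, so neither your Fully route nor the spanning route applies; the paper's Proposition~\ref{Prop-k=2} (whose proof uses the matrix identity of Lemma~\ref{LemmaL}) is doing real work here.
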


For $k\le 2$ we have a stronger result.

\begin{prop}\label{Prop-k=2}
Let $F$ be a nondegenerate quadric with $k\le 2$.
Then for generic $a\in\C^m$, we have
$(a^F)^F\cap a^F=0$.
\end{prop}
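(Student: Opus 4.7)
My plan is to reduce the vanishing to a kernel computation via the identity
$$
(a^F)^F\cap a^F=(a^F+\C a)^F,
$$
which follows from $(V+W)^F=V^F\cap W^F$ together with $(\C a)^F=a^F$. Setting $W_a:=a^F+\C a$, the proposition becomes $W_a^F=0$ for generic $a$; since $F_j(y,w)=\langle A_j y,w\rangle$, this amounts to showing that $A_j y\in W_a^\perp$ for all $j$ forces $y=0$.

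For $k=1$, part (b) of nondegeneracy forces $A_1$ to be invertible, so $\dim a^F=m-1$. Generically $F_1(a,a)\ne 0$, hence $a\notin a^F$ and $W_a=\C^m$; then $W_a^F=(\C^m)^F=\bigcap_z z^F=0$ by nondegeneracy.

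For $k=2$, I would first verify that $A_1a,A_2a$ are $\C$-linearly independent for generic $a$: if they were always $\C$-dependent, a short argument (for $m\ge 2$, using that $A_1,A_2$ are Hermitian) produces $\mu\in\R$ with $A_2=\mu A_1$, contradicting the $\R$-linear independence in part~(a). Hence generically $\dim a^F=m-2$ and, using $F(a,a)\ne 0$ generically, $\dim W_a=m-1$, so $W_a^\perp$ is one-dimensional. A routine computation identifies $W_a^\perp=\C v(a)$ with $v(a)=F_1(a,a)A_2 a-F_2(a,a)A_1 a$. Consequently $y\in W_a^F$ iff $A_1 y=\lambda_1 v(a)$ and $A_2 y=\lambda_2 v(a)$ for some $\lambda_1,\lambda_2\in\C$, which is a linear system of $2m$ equations in the $m+2$ unknowns $(y,\lambda_1,\lambda_2)$; for $m\ge 2$ one expects maximal rank $m+2$ generically, forcing $y=0$.

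The main obstacle is rigorously establishing this generic rank-count. My approach is to check maximal rank at a single $a_0$ and conclude by semicontinuity of rank. When the Hermitian pencil $c_1A_1+c_2A_2$ is regular (some real combination is invertible), one passes to a basis making $A_1$ invertible, writes $y=\lambda_1 A_1^{-1}v(a)$, and uses the compatibility equation $A_2 y=\lambda_2 v(a)$ to force $F_2(a,a)/F_1(a,a)$ to be an eigenvalue of $A_1^{-1}A_2$, a condition that fails on a Zariski dense open set of $a$. For a singular Hermitian pencil, one exhibits an explicit $a_0$ at which $v(a_0)$ lies outside ${\rm im}(A_1)$ or ${\rm im}(A_2)$, forcing the corresponding $\lambda_j=0$ and then concluding $y=0$ via $\ker A_1\cap\ker A_2=0$ from part~(b) of nondegeneracy.
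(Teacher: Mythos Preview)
Your reduction $(a^F)^F\cap a^F=(a^F+\C a)^F=W_a^F$ and the identification $W_a^\perp=\C v(a)$ with $v(a)=F_1(a,a)A_2a-F_2(a,a)A_1a$ are correct and lead cleanly to the system $A_jy=\lambda_j v(a)$. In the regular-pencil case your eigenvalue approach works, with one correction: writing $L=A_1^{-1}A_2$ and assuming $\lambda_1\ne0$, the two equations combine to $(L-F_2/F_1)(L-\lambda_2)a=0$, so a nontrivial solution forces $F_2(a,a)/F_1(a,a)$ to be an eigenvalue of $L$ \emph{or} $a$ to be an eigenvector of $L$; both conditions fail generically since $L$ is not scalar. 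This route is genuinely different from the paper's, which instead argues by contradiction to derive the polarized identity $\langle La,b\rangle^2=\langle a,b\rangle\langle L^2a,b\rangle$ for all $a,b$ and then proves a separate Jordan-form lemma forcing $L$ scalar. Your argument is more direct and avoids that lemma.

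The singular-pencil case, however, has a real gap. From $v(a_0)\notin\operatorname{im}(A_1)$ you get $\lambda_1=0$ and hence $y\in\ker A_1$, but this alone does \emph{not} place $y$ in $\ker A_2$, so you cannot yet invoke $\ker A_1\cap\ker A_2=0$. What is missing is the structural fact that when the pencil is singular and $A_1$ has maximal rank $r$ in the pencil, one has $A_2(\ker A_1)\subset\operatorname{im}(A_1)$; equivalently, in the block decomposition with $A_1=\operatorname{diag}(A_{11},0)$, the lower-right block $B_{22}$ of $A_2$ vanishes (this follows from $\operatorname{rank}(tA_1+A_2)\le r$ for large real $t$). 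Granting this, $A_2y\in\operatorname{im}(A_1)$ while $v(a_0)\notin\operatorname{im}(A_1)$ forces $\lambda_2=0$ as well, and then $y=0$ as you want. The paper supplies exactly this step through an explicit block computation; your sketch skips it, and without it the singular case does not close.
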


For $k=1$ the result is obvious.
Indeed, for $k=1$ we have $(a^F)^F=\C a$.
If $F(a,a)\ne 0$, then $(a^F)^F\cap a^F=0$.

\begin{lemma}\label{LemmaL}
Let $L$ be a $m\times m$ matrix such that for
all $a,b\in \C^m$,
\begin{equation}\label{Lab}
  \<La,b\>^2=\<a,b\>\<L^2a,b\>.
\end{equation}
Then $L=\lambda I$, a scalar matrix.
\end{lemma}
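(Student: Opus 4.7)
The plan is to exploit the hypothesis (\ref{Lab}) through polarization in $b$ and then specialization to $b=a$, which reduces the problem to showing that a Rayleigh-type quotient is constant.

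First, I would polarize (\ref{Lab}) in $b$ by substituting $b+tc$ for $b$ with $t\in\C$ and comparing the coefficient of $\bar t$ on both sides; both sides are polynomials of degree two in $\bar t$ because $\<\cdot,\cdot\>$ is antilinear in the second argument. This yields
$$
2\<La,b\>\<La,c\>=\<a,b\>\<L^2a,c\>+\<a,c\>\<L^2a,b\>.
$$
Setting $c=a$ and using (\ref{Lab}) with $b=a$ to eliminate $\<L^2a,a\>$ in favor of $\mu(a):=\<La,a\>/\<a,a\>$, the identity collapses (after dividing by $\<a,a\>\ne 0$) to
$$
\<L^2a-2\mu(a)La+\mu(a)^2 a,\,b\>=0
$$
for every $b$, and hence $(L-\mu(a)I)^2a=0$ for every nonzero $a\in\C^m$.

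The key step is next. For each $a\ne 0$, $(L-\mu(a)I)^2$ has a nonzero kernel, so $\mu(a)$ must be an eigenvalue of $L$. But $L$ has only finitely many eigenvalues, while $a\mapsto\mu(a)$ is continuous on the connected set $\C^m\setminus\{0\}$; hence $\mu$ is constant, say $\mu\equiv\lambda$. This is the main conceptual point of the argument; the potential pitfall --- that distinct generalized eigenspaces of $L$ could yield distinct values of $\mu$ --- is defused by topology.

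Finally, $\mu\equiv\lambda$ means $\<(L-\lambda I)a,a\>=0$ for every $a\in\C^m$. The standard polarization identity for sesquilinear forms over $\C$ (evaluating on $a+b$ and on $a+ib$) then forces $L-\lambda I=0$, i.e.\ $L=\lambda I$, as desired.
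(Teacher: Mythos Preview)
Your proof is correct and takes a genuinely different route from the paper's. The paper observes that the identity is invariant under similarity $L\mapsto PLP^{-1}$, puts $L$ in Jordan normal form, and then argues in two steps by explicit substitutions: first that a nontrivial Jordan block would force $a_2^2=a_1a_3$ (absurd), so $L$ is diagonal; and second that comparing eigenvalues via the substitution $c_1=c_2=1$ forces all eigenvalues equal.

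Your argument is coordinate-free: polarizing in $b$ and specializing $c=a$ gives $(L-\mu(a)I)^2 a=0$, so $\mu(a)$ lies in the finite spectrum of $L$; continuity on the connected set $\C^m\setminus\{0\}$ then pins $\mu\equiv\lambda$, and the complex polarization identity finishes. This avoids Jordan form and the attendant case analysis, at the cost of a mild topological input (connectedness). The paper's proof is more hands-on and purely algebraic; yours is shorter and more conceptual, and in fact yields the intermediate structural fact $(L-\mu(a)I)^2a=0$ for every $a$, which is stronger than what the paper's intermediate steps record.
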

\begin{proof}
The equation (\ref{Lab}) is invariant under similarity
$L\to PLP^{-1}$, $a\to Pa$, $b\to P^{*{-1}}b$.
Then without loss of generality we can assume
that $L$ is in Jordan normal form.

We first show that there are no Jordan cells
of order greater than 1, that is, $L$ is diagonal.
Suppose, otherwise, that there is a Jordan cell
of order greater than 1. Without loss of generality
we assume that $L$ consists of a single Jordan cell
$L=\lambda I+J$, here $J$ is the standard Jordan cell
with zero eigenvalue and units above the main diagonal.

Then after some cancellations, (\ref{Lab}) reduces to
$\<Ja,b\>^2=\<a,b\>\<J^2a,b\>$.
For $b=e_1$, the first element
of the standard basis, this equation turns into
$a_2^2=a_1 a_3$ ($a_2=0$ if $m=2$),
which is absurd because $a$ is arbitrary.
Hence $L$ is diagonal.

Let $L$ be diagonal with eigenvalues
$\lambda_1,\ldots,\lambda_m$.
Then we have
$$
\<a,b\>=\sum a_j \bar{b_j},\quad
\<La,b\>=\sum \lambda_j a_j \bar{b_j},\quad
\<L^2a,b\>=\sum \lambda_j^2 a_j \bar{b_j}.
$$
Put $c_j=a_j \bar{b_j}$. Then (\ref{Lab})
turns into
$$
\(\sum \lambda_j c_j\)^2
=\(\sum c_j\) \(\sum \lambda_j^2 c_j\).
$$
Put $c_1=c_2=1$, $c_j=0$ for $j>2$.
Then $(\lambda_1+\lambda_2)^2=2(\lambda_1^2+\lambda_2^2)$,
which implies $\lambda_1=\lambda_2$.
Similarly, all $\lambda_j$-s are equal,
hence $L$ is scalar, as desired.
\end{proof}

The following lemma must be well known.
\begin{lemma}\label{lemma-Aa-Ba-lin-indep}
Let $A$ and $B$ be linearly independent $m\times m$
Hermitian matrices.
Then for generic $a\in\C^m$, the vectors
$Aa$ and $Ba$ are $\C$-linearly independent.
\end{lemma}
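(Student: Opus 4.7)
The plan is to show that the ``bad set'' $S = \{a \in \C^m : Aa \text{ and } Ba \text{ are } \C\text{-linearly dependent}\}$ is a proper complex algebraic subvariety of $\C^m$; its complement is then open and dense, which is what ``generic'' requires. Since $Aa$ and $Ba$ depend $\C$-linearly on $a$, every $2\times 2$ minor of the $m\times 2$ matrix $(Aa\mid Ba)$ is a holomorphic polynomial in $a$ of degree two, and $S$ is exactly the common zero locus of these minors. Hence it suffices to show that if \emph{all} these minors vanish identically then $A$ and $B$ are linearly dependent, and this I prove by contrapositive.

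So assume $Aa$ and $Ba$ are proportional for every $a\in\C^m$. The case $A=0$ is immediate, so assume $A\ne 0$. On the open dense set $U=\{a:Aa\ne0\}$ the proportionality uniquely defines a function $\lambda:U\to\C$ with $Ba=\lambda(a)Aa$, rational in $a$. I will show $\lambda$ is constant, handling separately $\rank A\ge 2$ and $\rank A=1$.

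If $\rank A\ge 2$, fix $a_0\in U$ and choose $c\in U$ such that $Ac$ and $Aa_0$ are $\C$-linearly independent, which is possible because the range of $A$ has dimension at least two. For small $t\in\C$ we have $a_0+tc\in U$, and expanding $B(a_0+tc)=\lambda(a_0+tc)\,A(a_0+tc)$ while using $Ba_0=\lambda(a_0)Aa_0$ and $Bc=\lambda(c)Ac$ gives
$$
(\lambda(a_0)-\lambda(a_0+tc))\,Aa_0 \;+\; t\,(\lambda(c)-\lambda(a_0+tc))\,Ac \;=\; 0.
$$
Linear independence of $Aa_0$ and $Ac$ forces both coefficients to vanish, so $\lambda(a_0)=\lambda(c)$. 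Varying $c$ in an open set shows $\lambda$ is constant, say $\lambda\equiv\lambda_0$, and then $B=\lambda_0 A$ on a dense set and thus everywhere.

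The case $\rank A=1$ is where the Hermitian hypothesis does real work, and I expect it to be the main subtlety. A Hermitian rank-one matrix has the form $A=\varepsilon vv^*$ for some $v\ne0$ and $\varepsilon\in\{\pm1\}$, so $Aa\in\C v$ for every $a$. Proportionality then forces $Ba\in\C v$ for every $a\in U$, and by continuity for every $a\in\C^m$. Hence $\mathrm{range}(B)\subset\C v$, so $B=vw^*$ for some $w\in\C^m$. The identity $B=B^*$ then gives $vw^*=wv^*$, forcing $w=\mu v$ for some scalar $\mu$, whence $B=\mu vv^*=\varepsilon\mu A$. Combining the two cases completes the contrapositive. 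I note that without the Hermitian hypothesis the rank-one case genuinely fails, since $B=vw^*$ with $w$ not proportional to $v$ would yield $Ba$ proportional to $Aa$ for all $a$ while $A,B$ remain linearly independent; this is why the Hermitian assumption is essential.
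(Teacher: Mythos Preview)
Your argument is correct. The reduction to showing that the vanishing of all $2\times2$ minors forces $\C$-linear dependence of $A$ and $B$ (hence $\R$-dependence, since both are Hermitian) is clean, and both of your cases go through: in the $\rank A\ge 2$ case the set of $c$ with $Ac\not\parallel Aa_0$ is open and dense, so continuity of $\lambda$ on $U$ gives the constant you need; in the $\rank A=1$ case the identity $vw^*=wv^*$ indeed forces $w\in\C v$ and then Hermitianity makes the scalar real.

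Your route differs from the paper's. The paper argues by contradiction as you do, but splits cases according to $r=\max\{\rank(\alpha A+\beta B):\alpha,\beta\in\R\}$ rather than $\rank A$. For $r=m$ it replaces $A$ by an invertible real combination and observes that every vector being an eigenvector of $A^{-1}B$ makes $A^{-1}B$ scalar; for $r<m$ it diagonalizes $A$ with $\rank A=r$, uses a $2\times 2$ block decomposition, shows $B_{22}=0$ from $\rank(tA+B)\le r$, then $B_{21}=0$ from the dependence hypothesis and hence $B_{12}=0$ by Hermitianity, and reduces to the invertible case for the $r\times r$ blocks $A_{11},B_{11}$. Your decomposition by $\rank A$ alone is somewhat more direct and has the virtue of isolating exactly where the Hermitian hypothesis enters (only the rank-one case), together with your explicit counterexample showing it is genuinely needed there. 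The paper's approach, on the other hand, packages the non-invertible situation via a block reduction that may generalize more readily to families of more than two matrices; both are elementary and of comparable length.
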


For completeness, we include a proof.

\begin{proof}
Arguing by contradiction, we assume that for all
$z\in\C^m$, the vectors $Az$ and $Bz$ are linearly
dependent.

Let $r=\max \{\rank(\alpha A+\beta B): \alpha,\beta\in\R\}$.

We first consider the case $r=m$. For definiteness,
$A$ is nonsingular. Then for all $z\in\C^m$, there is
$\lambda\in\C$ such that $Bz=\lambda Az$, that is,
$A^{-1}Bz=\lambda z$. Then every $z\in\C^m$ is
an eigenvector of the matrix $A^{-1}B$. It can occur
only if $A^{-1}B$ is a scalar matrix.
Then $A^{-1}B=\lambda I$, and $B=\lambda A$,
which is absurd. Thus for $r=m$ the lemma holds.

We now consider the case $r<m$.
Without loss of generality assume $A$ is diagonal and
$\rank A=r$. We represent $A$ and $B$ as $2\times 2$
block matrices $A=(A_{pq})$, $B=(B_{pq})$, $p,q=1,2$
according to splitting the coordinates into two groups
of $r$ and $m-r$ coordinates. We number the coordinates
so that the $r\times r$ matrix $A_{11}$ is nonsingular
and the rest $A_{pq}=0$.
Since $\rank (tA+B)\le r$ for big $t\in\R$,
we must have $B_{22}=0$.

By the hypothesis, for every $z=(z_1,z_2)\in\C^m$,
we have $Bz=\lambda Az$ or $Az=0$.
The equation $Az=0$ means that $z_1=0$.
The equation $Bz=\lambda Az$, in particular, implies $B_{21}z_1=0$.
Then for every $z_1\ne 0$ we have $B_{21}z_1=0$.
Then $B_{21}=0$. Since $B$ is Hermitian, we have
also $B_{12}=0$. Then the result reduces to the case
$r=m$ for the matrices $A_{11}$ and $B_{11}$
that we considered earlier.
\end{proof}

\begin{proof}[Proof of Proposition \ref{Prop-k=2}]
Let $k=2$.
We change notation from $A_1$ and $A_2$ to $A$ and $B$.

As in the proof of Lemma \ref{lemma-Aa-Ba-lin-indep},
let $r=\max \{\rank(\alpha A+\beta B): \alpha,\beta\in\R\}$.

We first consider the strongly nondegenerate case $r=m$.
For definiteness, $A$ is nonsingular.
Arguing by contradiction, we assume that for generic
$a\in\C^m$ we have $(a^F)^F\cap a^F\ne 0$.

Put $U=\Span_\C(Aa,Ba)$.
We claim that $a^F=U^\bot$, here $U^\bot$ stands for the
orthogonal complement of $U$ with respect to the standard
{\em complex} inner product. We claim that $z\in (a^F)^F$ iff
$Az,Bz\in U$. The proofs of these claims are similar to
the proofs of the corresponding claims in the proof of
Lemma \ref{ReB-implies}.

Let $z\in (a^F)^F\cap a^F$, $z\ne 0$.
Since $z\in (a^F)^F$, we have
$Az=\alpha Aa+\beta Ba$ with $\alpha,\beta\in\C$.
Since $z\in a^F$, we have
$0=\<Az,a\>=\alpha \<Aa,a\>+\beta \<Ba,a\>$.
For generic $a$ we can assume $\<Aa,a\>\ne 0$.
Then $\alpha=-\beta\<Aa,a\>^{-1}\<Ba,a\>$.
Then $Az=-\beta(\<Aa,a\>^{-1}\<Ba,a\>Aa-Ba)$.
Since $A$ is nonsingular, we have $Az\ne0$, and $\beta\ne0$.
Without loss of generality $\beta=-\<Aa,a\>$.
Then
$Az=\<Ba,a\>Aa-\<Aa,a\>Ba$.
Then $z=\<Ba,a\>a-\<Aa,a\>A^{-1}Ba$.
Then $Bz=\<Ba,a\>Ba-\<Aa,a\>BA^{-1}Ba$.

Since $z\in a^F$, we have
$0=\<Bz,a\>=\<Ba,a\>^2-\<Aa,a\>\<BA^{-1}Ba,a\>$.

Since the equation $\<Ba,a\>^2=\<Aa,a\>\<BA^{-1}Ba,a\>$
holds for generic $a$, it holds for all $a\in\C^m$.
By polarization, we have
$\<Ba,b\>^2=\<Aa,b\>\<BA^{-1}Ba,b\>$.
By replacing $a$ by $A^{-1}a$, we have
$\<BA^{-1}a,b\>^2=\<a,b\>\<BA^{-1}BA^{-1}a,b\>$.

Put $L=BA^{-1}$.
Then we have $\<La,b\>^2=\<a,b\>\<L^2a,b\>$
for all $a,b\in \C^m$.
By Lemma \ref{LemmaL}
$L=\lambda I$ then $B=\lambda A$, which is absurd.
Thus the result holds in the strongly nondegenerate case
$r=m$.

Now consider the case $r<m$, that is,
every linear combination of $A$ and $B$ is singular.
Without loss of generality assume $A$ is diagonal and
$\rank A=r$.

As in the proof of Lemma \ref{lemma-Aa-Ba-lin-indep},
we represent $A$ and $B$ as $2\times 2$
block matrices $A=(A_{pq})$, $B=(B_{pq})$, $p,q=1,2$.
We assume that the $r\times r$ matrix $A_{11}$ is nonsingular
and the rest $A_{pq}=0$.
Then we again have $B_{22}=0$.
Since $F$ is nondegenerate, $B_{21}\ne0$.

We take $a=(a_1,a_2)\in\C^m$ such that
$\<Aa,a\>=\<A_{11}a_1,a_1\>\ne 0$ and $B_{21}a_1\ne 0$.
We prove
$(a^F)^F\cap a^F=0$.

Let $z=(z_1,z_2)\in (a^F)^F\cap a^F$.
As above, we have
$Az=\alpha Aa+\beta Ba$ with $\alpha,\beta\in\C$ and
$0=\<Az,a\>=\alpha \<Aa,a\>+\beta \<Ba,a\>$. In more detail,
\begin{equation*}
Az=\begin{pmatrix}
  A_{11}z_1 \\
  0
\end{pmatrix}
=\alpha\begin{pmatrix}
         A_{11}a_1 \\
         0
       \end{pmatrix}
+\beta\begin{pmatrix}
        \ldots \\
        B_{21}a_1
      \end{pmatrix}.
\end{equation*}
Since $B_{21}a_1\ne0$, we have $\beta=0$.
Then $\alpha \<Aa,a\>=0$.
Since $\<Aa,a\>\ne 0$, we have $\alpha=0$.
Then $Az=0$, that is, $A_{11}z_1=0$,
hence $z_1=0$.

Similarly, we have
$Bz=\gamma Aa+\delta Ba$ with $\gamma,\delta\in\C$ and
$0=\<Bz,a\>=\gamma \<Aa,a\>+\delta \<Ba,a\>$.
Since $z_1=0$, we have
\begin{equation*}
Bz=\begin{pmatrix}
  B_{12}z_2 \\
  0
\end{pmatrix}
=\gamma\begin{pmatrix}
         A_{11}a_1 \\
         0
       \end{pmatrix}
+\delta\begin{pmatrix}
        \ldots \\
        B_{21}a_1
      \end{pmatrix}.
\end{equation*}
By the same argument as above, we have
$\delta=\gamma=0$, and $Bz=0$.
Since $F$ is nondegenerate, $Az=Bz=0$ implies
$z=0$, and $(a^F)^F\cap a^F=0$, as desired.
\end{proof}

\begin{proof}[Proof of Proposition \ref{k=<3}]
By Proposition \ref{Prop-k=2}, it only remains to consider
the case $k=3$.
Let $r=\max\{\rank_\C(A_1a, A_2a, A_3a): a\in\C^m\}$.
By Lemma \ref{lemma-Aa-Ba-lin-indep}, the case
$r=1$ can not occur.

If $r=3$, then for generic $a$, the vectors
$A_1a$, $A_2a$, and $A_3a$ are $\C$-linearly independent.
Then by Proposition \ref{Fully},
$F$ is T-nondegenerate.

Finally, consider the case $r=2$.
For definiteness, assume that for generic $a\in\C^m$,
the vectors $A_1a$ and $A_2a$ are $\C$-linearly independent,
then $A_3a$ is a $\C$-linear combination of $A_1a$ and $A_2a$.

We denote by $F'$ the form defined by the matrices
$A_1$ and $A_2$.
Then $F'$ is nondegenerate.
Indeed, let $a\in\C^m$ be a vector such that
for all $z\in\C^m$, we have $F'(z,a)=0$. For generic $z$,
the vector $A_3z$ is a $\C$-linear combination of
$A_1z$ and $A_2z$.
Then for generic $z$, we have $F(z,a)=0$. Then $F(z,a)=0$
for all $z$. Since $F$ is nondegenerate, $a=0$.
Thus $F'$ is nondegenerate.

By Proposition \ref{Prop-k=2}, for generic $a$, we have
$(a^{F'})^{F'}\cap a^{F'}=0$.
Since $A_3a$ is a $\C$-linear combination of
$A_1a$ and $A_2a$ for generic $a$, we have
$a^F=a^{F'}$ for generic $a$. Then for generic $a$,
we have
$(a^F)^F=(a^{F'})^F\subset (a^{F'})^{F'}$, then
$(a^F)^F\cap a^F \subset (a^{F'})^{F'}\cap a^{F'}=0$.
Now by Proposition \ref{Prop-orth-compl-imply-T},
$F$ is T-nondegenerate, as desired.
\end{proof}

\section{Examples}

\begin{example}
\label{Example1}
{\rm
Meylan \cite{M2020}
has found an example of a strongly nondegenerate quadric
for which $\g_4\ne 0$, so 2-jet determination fails.
Here $m=4, k=5$, $F=(F_1,\ldots,F_5)$.
\begin{align*}
&F_1(z,z)=\Re(z_1 \bar z_3+z_2 \bar z_4),\\
&F_2(z,z)=|z_1|^2,\\
&F_3(z,z)=|z_2|^2,\\
&F_4(z,z)=\Re(z_1 \bar z_2),\\
&F_5(z,z)=\Im(z_1 \bar z_2).
\end{align*}
In this example, $F$ is {\em not} T-nondegenerate, and
$a^F\cap (a^F)^F\ne0$ for generic $a$.
}
\end{example}

\begin{example}
{\rm
Let $m=4, k=4$, and $F=(F_1,F_2,F_3,F_4)$
from Example \ref{Example1}.
Then $F$ is {\em not} T-nondegenerate, but
one can see that $\g_3=0$, so 2-jet determination
takes place.
This example shows that T-nondegeneracy is not necessary
for $\g_3=0$ to hold.
}
\end{example}

\begin{example}
{\rm
Let $m=4, k=3$, and $F=(F_1,F_2,F_3)$
from Example \ref{Example1}.
Since $k=3$, the form $F$ is T-nondegenerate,
$\g_3=0$, so 2-jet determination takes place.
However, one can see that $F$ is {\em not}
D-nondegenerate and
$(a^F)^F \cap a^F \ne0$ for generic $a$.
This example shows that
Proposition \ref{Prop-k=2} does not necessarily
hold for $k=3$.
}
\end{example}

\begin{example}
{\rm
Monomial quadric.
We call $F$ monomial if all components of $F$
have the form
$\Re(z_p \bar{z_q})$
(in particular, $|z_p|^2$)
or $\Im(z_p \bar{z_q})$ ($p\ne q$),
and there are no repeated components.
A monomial quadric $F$ is nondegenerate iff each variable
$z_p$ occurs in at least one component of $F$.
For such a quadric, the codimension $k$ can be any integer
$\frac{m}{2}\le k\le m^2$.
One can see that if $a\in\C^m$ has no zero components,
then $(a^F)^F\cap a^F=0$. Hence $F$ is T-nondegenerate.
However, $F$ can not be D-nondegenerate if $k>2m$.
}
\end{example}

\begin{example}
{\rm
Monomial antisymmetric quadric of an odd dimension.
This is a special case of the previous example
in which $m\ge 3$ is an odd integer, and all
components of $F$ have the from
$\Im(z_p\bar{z_q})$.
Then $k\le \frac{m(m-1)}{2}$.
Note that $F$ is not {\em strongly} nondegenerate,
hence not D-nondegenerate.
Indeed, the matrices of the components of $F$
are antisymmetric, and every linear combination of them
is an antisymmetric matrix of an odd order, hence singular.
As a special case of the previous example, if $F$ is
nondegenerate, then $F$ is T-nondegenerate, and 2-jet determination holds. However, this result would be difficult
to obtain by means of stationary discs because $F$ is not strongly nondegenerate.
}
\end{example}

\bigskip
\bigskip
{\bf Conflict of interest statement:}
There is no conflict of interest.

\bigskip
{\bf Data availability statement:}
This manuscript has no associated data.

\end{document}